
\documentclass[12pt,a4paper]{amsart}
\usepackage{amsmath,amsthm}
\usepackage{amssymb}
\usepackage[a4paper,centering,twoside,textwidth=6in]{geometry}
\usepackage[arrow,matrix]{xy}
\usepackage{amsmath,amsfonts,amssymb,amscd,verbatim,delarray,verbatim}

%\usepackage[small,nohug,heads=vee]{diagrams}
%\diagramstyle[labelstyle=\scriptstyle]

%\usepackage[arrow,matrix]{xy}
\theoremstyle{plain}
\newtheorem{theorem}{Theorem}[section]

\newtheorem{thm}{Theorem}[section]
\newtheorem{lem}[thm]{Lemma}
\newtheorem{cor}[thm]{Corollary}
\theoremstyle{definition}
\newtheorem{ex}[thm]{Example}
\newtheorem{rem}[thm]{Remark}

\theoremstyle{definition}
\newtheorem{defn}[theorem]{Definition}

\newcommand{\thmref}[1]{Theorem~\ref{#1}}
\newcommand{\secref}[1]{Section~\ref{#1}}

 \DeclareMathOperator{\GL}{GL}
 
\newcommand{\BZ}{\mathbb{Z}}
\newcommand{\BC}{\mathbb{C}}

\newcommand{\BP}{\mathbb{P}}
\newcommand{\BQ}{\mathbb{Q}}

\DeclareMathSymbol{\twoheadrightarrow}  {\mathrel}{AMSa}{"10}

\def\P{{\mathbb P}}

\def\A8{{\mathbf A}_8}
\def\Bir{\mathrm{Bir}}

\def\End{\mathrm{End}}
\def\Aut{\mathrm{Aut}}

                                   \def\rank{\mathrm{rk}}

\def\fchar{\mathrm{char}}
             
\def\GL{\mathrm{GL}}

                                                  \def\GO{\mathrm{GO}}
                                                      \def\PO{\mathrm{PGO}}

\def\A{\mathcal{A}}

\def\OO{\mathrm{O}}
            \def\SO{\mathrm{SO}}

\def\dim{\mathrm{dim}}\def\codim{\mathrm{codim}}

                                                \def\Ac{{\mathcal A}}
                                                 \def\Bc{{\mathcal B}}

          \def\l1{{\mathbf 1}}

\hyphenation{equi-var-i-ant}

\title[Jordan groups]{Jordan groups, conic bundles and abelian varieties}
%\thanks{The first named author is partially supported by the Ministry
%of Absorption (Israel), the Israeli Science Foundation (Israeli
%Academy of Sciences, Center of Excellence Program), the Minerva
%Foundation (Emmy Noether Research Institute of Mathematics). }
\thanks{The second named author is partially supported by a grant from the Simons Foundation (\#246625 to Yuri Zarkhin). Part of this work was done in May-June 2016 during his stay at the Max-Planck-Institut f\"ur Mathematik, whose hospitality and support are gratefully acknowledged.}

\author {Tatiana Bandman}

\author[Yuri G.\ Zarhin]{Yuri G.\ Zarhin}
\address{Department of
Mathematics, Bar-Ilan University, 5290002, Ramat Gan, ISRAEL}
\email{bandman@macs.biu.ac.il}
\address{Department of Mathematics, Pennsylvania State University,
University Park, PA 16802, USA}

\email{zarhin\char`\@math.psu.edu}

\begin{document}

\begin{abstract} 
 A group 
$G$ is called {\sl Jordan} if there is a positive integer $J=J_G$ such that every finite subgroup $\mathcal{B}$ of $G$ contains a commutative subgroup $\mathcal{A}\subset \mathcal{B}$ such that $\mathcal{A}$ is normal in $\mathcal{B}$ and the index $[\mathcal{B}:\mathcal{A}] \le J$ (V.L. Popov).  In this paper we deal with Jordaness properties of the groups $\Bir(X)$ of birational automorphisms  of  irreducible smooth projective varieties $X$ over an algebraically closed field of characteristic zero. It is known (Yu. Prokhorov - C. Shramov) that $\Bir(X)$ is Jordan if $X$ is {\sl non-uniruled}. On the other hand,  the second named author proved that $\Bir(X)$  is {\sl not} Jordan if $X$ is birational to a product of the projective line  $\BP^1$ and a positive-dimensional abelian variety. 

We prove that   $\Bir(X)$ is Jordan if (uniruled) $X$ is a {\sl conic bundle} over a {\sl non-uniruled} variety $Y$ but is {\sl not} birational to $Y\times\BP^1$. (Such a conic bundle exists if and only if $\dim(Y)\ge 2$.)  When $Y$ is an abelian surface, this Jordaness property result gives an answer to a question of Prokhorov and Shramov.
\end{abstract}

\subjclass[2010]{14E07, 14J50, 14L30, 14K05, 14H45, 14H37, 20G15}

\maketitle

\section{Introduction}

In this paper we deal with the groups of birational and biregular automorphisms of algebraic varieties in characteristic {\sl zero}. 
 
If $X$ is an  irreducible algebraic variety over a field $K$ of characteristic zero then we write  $\mathcal{O}_X$ for the structure sheaf of $X$, $\Aut(X)=\Aut_K(X)$ (resp.  $\Bir(X)=\Bir_K(X)$) for the group of its biregular (resp. birational) automorphisms and $K(X)$ for the field of rational functions on $X$.  We have
$$\Aut(X)\subset \Bir(X)=\Aut_K(K(X))$$
where $\Aut_K(K(X))$ is the group of $K$-linear field automorphisms of $K(X)$. We write $\mathrm{id}_X$ for the identity automorphism of $X$, which is the identity element of the groups $\Aut(X)$ and $\Bir(X)$.
If $n$ is a positive integer then we write $\mathbb{P}^n_K$ (or just ${\mathbb{P}^n}$ when it does not cause a confusion) for the $n$-dimensional projective space over $K$.

If $X$ is  smooth projective then we write $q(X)$ for its {\sl irregularity}.  For example, if $X$ is an abelian variety then $q(X)=\dim(X)$. 

In what follows we write $k$ for an algebraically closed field of characteristic zero.  We write  $\cong$ 
and $\sim$ 
for an  isomorphism 
and birational isomorphism
 of  algebraic varieties
respectively.
If $\mathcal{A}$ is a finite commutative group then we call its rank the smallest possible number of its generators and denote it by $\rank(\mathcal{A})$. If $\mathcal{B}$ is a finite group then we write $\mid \mathcal{B}\mid$ for its order.

\subsection     { Jordan groups.}\label{jordan  groups}

Recall (Popov \cite{Popov1,Popov2}) the following definitions that were motivated by the classical theorem of Jordan about finite subgroups of the complex matrix group $\GL(n,\BC)$ \cite[\S 36]{CR}.

\begin{defn}\label{def:jordan}
Let $G$ be a group.
\begin{itemize}
\item
$G$ is called {\sl Jordan} if there is a positive integer $J=J_G$ such that every finite subgroup $\mathcal{B}$ of $G$ contains a commutative subgroup $\mathcal{A}\subset \mathcal{B}$ such that $\mathcal{A}$ is normal in $\mathcal{B}$ and the index $[\mathcal{B}:\mathcal{A}] \le J$. 
\item
We say that $G$ has {\sl finite subgroups of bounded rank}  if there is a positive integer $m=m_G$ such that  any finite abelian subgroup $\mathcal{A}$ of $G$ can be generated by at most $m$ elements \cite{TurullM,ProSha1}.  
\item

We call a Jordan group $G$ {\sl strongly Jordan} if there is a positive integer $m=m_G$ such that  any finite abelian subgroup $\mathcal{A}$ of $G$ can be generated by, at most, $m$ elements \cite{TurullM}. In other words, $G$ is strongly Jordan if it is Jordan and has finite 
%%%%%%%%%%Added June 26, 2016%%%%%%%%
abelian
%%%%%%%%%%%%%%%%%%%%%%
subgroups of bounded rank.
\item
We say that $G$ is {\sl bounded}  \cite{Popov2,ProSha1} if there is a positive integer $C=C_G$ such that the order of every finite subgroup of $G$  does not exceed $C$. (A bounded group is Jordan and even strongly Jordan.)
\end{itemize}
\end{defn}

%%%%%%%%%%%%%%%%%%%new stuff added  and deleted on July 2, 2016%%%%%%%%%%%%%%

%\begin{rem}
%Let $G$ be a group and $H$ its subgroup. If $G$ is Jordan or strongly Jordan or bounded then
%$H$ is also   Jordan or strongly Jordan or bounded  respectively.
%\end{rem}

%%%%%%%%%%%%%%%%%%%%%%%%%%%%%%

 One may introduce similar definitions for families of groups  \cite{Popov1,ProSha1}.

\begin{defn}
Let $\mathcal{G}$ be a family of groups.
\begin{itemize}
\item 
We say that  $\mathcal{G}$ is  {\sl uniformly Jordan} (resp. {\sl uniformly strongly Jordan}) if   there is a positive integer $\mathcal{J}$ (resp.  there are positive integers  $\mathcal{J}$ and $\mathcal{M}$)  such that each $G \in \mathcal{G}$  enjoys Jordan property (resp. strong Jordan property) with $J_G=\mathcal{J}$ (resp. with $J_G=\mathcal{J}$ and $m_G=\mathcal{M}$).   
\item
We say that $\mathcal{G}$ is {\sl uniformly bounded} if there is a positive integer $C$ such that  the order of every finite subgroup of every $G$ from  $\mathcal{G}$  does {\sl not} exceed $C$. (See \cite[Remark 2.9 on p. 2058]{ProSha1}.)
\end{itemize}
\end{defn}

\begin{rem}
In the terminology of \cite[p. 2067]{ProSha1}, a family  $\mathcal{G}$ is uniformly strongly Jordan if and only if it is uniformly  Jordan and has finite subgroups of uniformly bounded rank.
\end{rem}

\subsection{ Jordan properties of $\Bir(X)$ and $\Aut(X)$.}\label{subsect:1.3}

Let $X$ be an irreducible quasiprojective variety over $k$.  There is the natural group embedding
$$\Aut(X) \hookrightarrow \Bir(X)$$
that allows us to view $\Aut(X)$ as a subgroup of $\Bir(X)$. In particular, the Jordan property (resp. the strong Jordan property) for $\Bir(X)$ implies the same property for  $\Aut(X)$. However, the converse is not necessarily true.  More precisely:
\begin{itemize}\item
It  is known that $\Aut(X)$ is Jordan if $\dim(X) \le 2$ \cite{Popov1,ZarhinTG,BandmanZarhinTG}. 
\item It is also known (Popov \cite{Popov1}) that $\Bir(X)$ is Jordan if $\dim(X) \le 2$ and $X$ is {\sl not} biratrional to a product $E \times \mathbb{P}^1$ of an elliptic curve $E$ and the projective line $\mathbb{P}^1$.   (The Jordan property of the two-dimensional Cremona group  $\Bir(\mathbb{P}^2)$ was established earlier by J.-P. Serre \cite{Serre}.) 
\item
In the remaining case $\Bir(E \times \mathbb{P}^1)$ is {\sl not} Jordan. More   generally, if $A$ is an abelian variety of {\sl positive dimension} over $k$ and $n$ is a positive integer, then $\Bir(A \times \mathbb{P}^n)$ is {\sl not} Jordan \cite{ZarhinPEMS}. 

 Notice that $\Bir(A)$ coincides with $\Aut(A)$ and is strongly Jordan \cite{Popov1}.   Actually, if  $\mathbf{A}_d$ is the family of groups $\Bir(A)$  when $A$ runs through the set of all $d$-dimensional abelian varieties over $k$ then  $\mathbf{A}_d$ is uniformly strongly Jordan \cite[Corollary 2.15 on p.  2058]{ProSha1}.
\item
In higher dimensions, a recent result of Meng and Zhang \cite{MengZhang} asserts that $\Aut(X)$ is Jordan if $X$ is {\sl projective}.
\end{itemize}

  For groups of birational automorphisms  in higher dimensions Prokhorov and Shramov \cite{ProSha1} proved the following strong result \cite[Th.  1.8]{ProSha1}.

\begin{thm}\label{PS}{                                                                                                                                                  }
{                                           }
\begin{enumerate}
\item
If $X$ is  non-uniruled then $\Bir(X)$ is Jordan.
\item
If $X$ is  non-uniruled and $q(X)=0$ then $\Bir(X)$ is bounded. 
\end{enumerate}
\end{thm}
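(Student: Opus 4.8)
The plan is to argue by induction on $n=\dim X$, proving both parts together. The first step is the standard reduction to biregular actions: given a finite subgroup $\mathcal B\subseteq\Bir(X)$, equivariant resolution of singularities produces a smooth projective variety $X'$, birational to $X$ and hence again non-uniruled with $q(X')=q(X)$, on which $\mathcal B$ acts biregularly; so it suffices to bound the index of an abelian normal subgroup in a finite subgroup $\mathcal B\subseteq\Aut(X)$ with $X$ smooth projective non-uniruled. (Since all smooth projective models of $X$ share the function field $k(X)$, and the Albanese variety and the generic Albanese fibre used below are birational invariants, the bound produced this way is well defined for $X$.)

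Consider the Albanese map $a\colon X\to A:=\mathrm{Alb}(X)$, with $q=\dim A=q(X)$, and its Stein factorization $X\to Y\to A$. Functoriality of the Albanese gives a homomorphism $\rho\colon\mathcal B\to\Aut(A)$. Since $\Bir(A)=\Aut(A)$ and the family $\mathbf A_q$ of such groups is uniformly strongly Jordan (recalled in the introduction), $\rho(\mathcal B)$ has an abelian normal subgroup $\mathcal A$ with $[\rho(\mathcal B):\mathcal A]\le\mathcal J(q)\le\mathcal J(n)$; put $\mathcal B'=\rho^{-1}(\mathcal A)\trianglelefteq\mathcal B$, so $[\mathcal B:\mathcal B']\le\mathcal J(n)$. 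The subgroup $\mathcal B_1:=\ker(\rho|_{\mathcal B'})$ acts trivially on $A$, hence preserves every fibre of $a$ and acts faithfully on the geometric generic fibre $F$ of $X\to Y$; this $F$ is a non-uniruled variety over an algebraically closed field of characteristic zero of dimension $<n$ as soon as $q>0$. Thus $\mathcal B_1\hookrightarrow\Bir(F)$, $\mathcal B'/\mathcal B_1\cong\mathcal A$ is abelian, and everything comes down to controlling this extension.

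Here the key point is that automorphisms of $F$ are \emph{rigid} in the family of Albanese fibres: because $F$ is non-uniruled, $\Aut^0(F)$ contains no additive subgroup (a generic orbit would be a rational curve sweeping out $X$), and one checks $\Aut^0(F)$ is an abelian variety (so of dimension at most $q(F)$); when $q(F)=0$ (or $F$ is of general type) it is therefore trivial, so each element of $\mathcal B'$ induces the \emph{same} automorphism on every fibre via a fixed identification with $F$, and one obtains an embedding $\mathcal B'\hookrightarrow\Bir(F)\times\mathcal A$. By part~(2) applied in dimension $<n$, $\Bir(F)$ is bounded, say all its finite subgroups have order $\le b=b(F)$; then the kernel of $\mathcal B'\to\Bir(F)$ is an abelian subgroup of index $\le b$, and it is even normal in $\mathcal B$ (conjugation transports the trivial fibrewise action to the trivial fibrewise action), so $\mathcal B$ has an abelian normal subgroup of index $\le\mathcal J(n)\cdot b$, proving part~(1). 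If instead $q(F)>0$, one iterates the construction on $F$, tracking via the relative Albanese how the fibrewise automorphisms deform, until the tower bottoms out at a fibre of irregularity zero. As for part~(2) itself: if $\kappa(X)=\dim X$ then $\Bir(X)=\Aut(X)$ is finite; if $0\le\kappa(X)<\dim X$ one uses the $\Bir(X)$-equivariant Iitaka fibration, whose general fibre has Kodaira dimension $0$, reduces by Kawamata's theorem on varieties of Kodaira dimension zero to a fibration over an abelian variety with fibres of Kodaira dimension and irregularity zero, bounds the finite subgroups of the automorphism group of such a fibre through its action on a lattice of bounded rank (Minkowski), and controls the fibrewise translations along the abelian part by the Mordell--Weil--Lang--N\'eron theorem; the (conjecturally vacuous) case of a non-uniruled $X$ with $\kappa(X)=-\infty$ is handled by replacing the Iitaka fibration with Nakayama's divisorial Zariski decomposition, using that non-uniruledness is equivalent to pseudoeffectivity of $K_X$ (Boucksom--Demailly--P\u{a}un--Peternell).

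The main obstacle is the interaction of the two ``halves'' produced by the Albanese map. An abstract extension of an abelian group by a group of bounded order need \emph{not} be Jordan with a bounded constant --- extraspecial $p$-groups are extensions of $\Z/p$ by $(\Z/p)^{2k}$ with no abelian subgroup of bounded index --- so pure group theory does not suffice; the geometric rigidity of automorphisms of non-uniruled fibres of irregularity zero is genuinely needed, and with it the boundedness statement of part~(2) as an input. Part~(2) in turn rests on the fairly deep structure theory of varieties of non-negative Kodaira dimension together with a Mordell--Weil-type finiteness. By contrast, equivariant resolution, the structure $\Aut(A)=A(k)\rtimes\Aut(A,0)$ together with the uniform strong Jordan property of these groups, Minkowski's theorem, and the elementary group theory (normal cores and the like) are all standard.
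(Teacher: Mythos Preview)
The paper does not prove this theorem: it is quoted as a result of Prokhorov and Shramov \cite[Th.~1.8]{ProSha1} and used as a black box (together with the bounded-rank refinement of Remark~\ref{PSr}). The closest the paper comes to unpacking the argument is inside the proof of Corollary~\ref{exactJ}, where one sees that the Prokhorov--Shramov machinery is organized around the short exact sequences $1\to G_{\mathrm{alg}}\to G\to G_N\to 1$ and $1\to G_L\to G_{\mathrm{alg}}\to G_{\mathrm{ab}}\to 1$, with the pieces controlled respectively by the action on the N\'eron--Severi lattice (Minkowski-type bounds) and by the structure of the connected automorphism group scheme; it is not an induction on Albanese fibres.

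Your sketch has a genuine gap at the ``rigidity'' step. From $\Aut^0(F)=0$ you jump to an embedding $\mathcal B'\hookrightarrow\Bir(F)\times\mathcal A$, asserting that each element of $\mathcal B'$ ``induces the same automorphism on every fibre via a fixed identification with $F$''. But elements of $\mathcal B'\setminus\mathcal B_1$ permute the Albanese fibres via $\mathcal A$; distinct fibres are not canonically isomorphic (indeed need not be isomorphic at all), so there is no map $\mathcal B'\to\Bir(F)$ extending the one on $\mathcal B_1$. Triviality of $\Aut^0(F)$ only says that an automorphism of a \emph{single} fibre does not move in a connected family; it gives no splitting of the extension $1\to\mathcal B_1\to\mathcal B'\to\mathcal A\to 1$. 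You yourself observe that an abstract extension of an abelian group by a bounded group can fail to be Jordan (extraspecial $p$-groups), and your geometric input does not exclude this. This is precisely why the actual proof works with the linear action on $\mathrm{NS}(X)$ and on $H^0(X,\Omega_X^1)$ globally rather than attempting to decompose the group along the Albanese fibration.
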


 \begin{rem}
\label{PSr}
Prokhorov and Shramov \cite[Remark 6.9 on p. 2065]{ProSha1} noticed that if $X$ is  non-uniruled then $\Bir(X)$ has finite subgroups of bounded rank. This means that in the non-uniruled case $\Bir(X)$ is {\sl strongly Jordan}.
\end{rem}

In addition, in dimension $3$ Prokhorov and Shramov  proved \cite{ProSha2,ProSha1} that:

\begin{itemize}
\item
 If   $q(X)=0$ then $\Bir(X)$ is  {\sl Jordan}.
\item
If $X$ is rationally connected then $\Bir(X)$ is strongly Jordan. Even better, 
 if $X$ varies in the set of rationally connected threefolds then the corresponding family of groups $\Bir (X)$ is {\sl uniformly strongly Jordan} \cite[Th. 1.7 and 1.10]{ProSha2}.   
\end{itemize}

 Actually,  they proved all these assertions in arbitrary dimension $d$, assuming that the well-known conjecture of A. Borisov, V. Alexeev and L. Borisov (BAB conjecture \cite{Bor}) about the boundedness of  families of $d$-dimensional Fano varieties with terminal singularities is valid in dimension $d$. 

In light of their results it remains to investigate Jordaness properties of $\Bir(X)$ when $X$ is {\sl uniruled} with $q(X)>0$. According to Prokhorov and Shramov
 \cite[p. 2069]{ProSha1}, it is natural to start with a    {\sl conic bundle} $X$ over an {\sl abelian surface} $A$ when $X$ is {\sl not} birational to a product $A\times \mathbb{P}^1$.   

In this work we prove that in this case  $\Bir(X)$ is Jordan. Actually, we prove the following more general statement.
%\end{sect}

\begin{thm}
\label{BirX}
Let $X$ be an irreducible  smooth projective variety of dimension $d \ge 3$ over $k$ and $f: X \to Y$ be a surjective morphism over $k$ from $X$ to a $(d-1)$-dimensional abelian variety $Y$ over $k$.   Let us assume that the generic fiber of $f$ is a genus zero smooth projective irreducible curve $\mathcal{X}_f$ over $k(Y)$ without   $k(Y)$-points. Then $\Bir(X)$ is strongly Jordan.
\end{thm}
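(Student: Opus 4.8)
The plan is to exploit the conic bundle structure $f\colon X\to Y$ together with the fact that $\mathcal{X}_f$ has no $k(Y)$-point, which forces $\mathcal{X}_f$ to be a nontrivial Severi–Brauer curve (a conic) over $k(Y)$, i.e.\ it represents a $2$-torsion class $\alpha\in \mathrm{Br}(k(Y))$. The key structural input is that any $\gamma\in\Bir(X)$ acts on $k(X)$ and hence on the subfield $k(Y)$ (since $k(Y)$ is characterized inside $k(X)$ as, say, the algebraic closure of the maximal subfield over which $k(X)$ has transcendence degree $1$, or via the unique structure of $X$ as a conic bundle over a non-uniruled base obtained from the maximal rationally connected fibration / $\mathrm{MRC}$ considerations). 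This gives a homomorphism $\rho\colon \Bir(X)\to \Bir(Y)=\Aut(Y)$, and since $Y$ is an abelian variety, $\Aut(Y)$ is strongly Jordan. So it suffices to control finite subgroups of the kernel $\Bir(X/Y)$, the group of birational automorphisms of $X$ over $k(Y)$, which is exactly $\Bir_{k(Y)}(\mathcal{X}_f)=\mathrm{PGL}_2$-twist, namely the automorphism group of the conic $\mathcal{X}_f$ over $k(Y)$.

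First I would set up the exact sequence
\[
1 \longrightarrow \Bir_{k(Y)}(\mathcal{X}_f) \longrightarrow \Bir(X) \xrightarrow{\ \rho\ } \Aut(Y),
\]
and note $\Bir_{k(Y)}(\mathcal{X}_f)\cong \mathrm{Aut}(C)$ where $C$ is the conic over the field $F=k(Y)$. Next I would bound finite subgroups of $\mathrm{Aut}(C)$: since $C(F)=\varnothing$, $C$ has no $F$-rational automorphism coming from a torus of split type, and more importantly a finite subgroup $\mathcal B$ of $\mathrm{Aut}(C)$ gives a finite subgroup of $\mathrm{Aut}(C_{\bar F})=\mathrm{PGL}_2(\bar F)$ stable under the Galois/semilinear structure; one shows such $\mathcal B$ is either cyclic, or dihedral, or one of $A_4,S_4,A_5$, and in fact the nonabelian finite subgroups of $\mathrm{PGL}_2$ over a field of characteristic zero are bounded (order $\le 60$) — but the truly relevant point is that $\mathcal B$ must be a subgroup of $\mathrm{Aut}(C)$, and when $C$ is anisotropic one checks directly (descent of fixed points) that $\mathcal B$ cannot contain a subgroup that would force an $F$-point, which rules out large cyclic groups only after passing to the twisted torus $R^1_{E/F}\mathbb{G}_m$ for the quadratic étale algebra $E$ splitting $C$. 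The cleanest route: $\mathrm{Aut}(C)\hookrightarrow \mathrm{PGL}_2(\bar F)$ Galois-equivariantly, a finite subgroup $\mathcal B$ lifts to a finite subgroup of $\mathrm{GL}_2(\bar F)$ of bounded-by-$2\cdot|\mathcal B|$ order inside $\mathrm{Aut}$ of the underlying rank-$2$ Azumaya-type object, and then Jordan's classical theorem for $\mathrm{GL}_2$ (with the rank bound $m=2$ for abelian subgroups) yields that $\mathrm{Aut}(C)$ is strongly Jordan with an absolute constant.

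Then I would assemble: given a finite subgroup $\mathcal B\subset\Bir(X)$, its image $\rho(\mathcal B)\subset\Aut(Y)$ contains a normal abelian subgroup of index $\le J_1$ and rank $\le m_1$ (strong Jordan for abelian varieties, uniformly in $Y$), and $\mathcal B\cap\ker\rho$ is a finite subgroup of the strongly-Jordan group $\mathrm{Aut}(C)$, hence contains a normal abelian subgroup of index $\le J_2$, rank $\le m_2$. A standard extension argument (take the preimage in $\mathcal B$ of the abelian part of $\rho(\mathcal B)$, intersect with the abelian part of the kernel after further refinement, using that index-bounded-by-$J$ and rank-bounded-by-$m$ are preserved up to explicit functions under extensions — this is Lemma-type bookkeeping already implicit in \cite{ProSha1}) produces a normal abelian subgroup of $\mathcal B$ of index and rank bounded by absolute constants. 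That gives strong Jordaness of $\Bir(X)$.

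The main obstacle, and the place where the hypothesis $C(k(Y))=\varnothing$ is essential, is the very first step: showing that every birational self-map of $X$ descends to an automorphism of $Y$, i.e.\ that $f\colon X\to Y$ is canonical enough to be $\Bir(X)$-equivariant. If $X$ were birational to $Y\times\mathbb P^1$ this fails spectacularly (that is the non-Jordan case of \cite{ZarhinPEMS}), so the argument must use that $\mathcal{X}_f$ is a \emph{nontrivial} conic to pin down $k(Y)$ inside $k(X)$ functorially — for instance by characterizing $k(Y)$ as the subfield of elements whose divisor on a model is supported away from the (non-existent) section, or via the maximal non-uniruled rational quotient. I expect the bulk of the work to be in making this equivariance rigorous, and a secondary, more routine difficulty in the uniform extension-of-Jordan-constants bookkeeping, which I would handle by citing or adapting the corresponding lemmas of Prokhorov–Shramov.
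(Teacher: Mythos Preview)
Your overall architecture (the exact sequence $1\to\Bir_{k(Y)}(\mathcal{X}_f)\to\Bir(X)\to\Bir(Y)$ plus strong Jordaness of $\Bir(Y)$) matches the paper, but there is a genuine gap in the middle step, and it is precisely the heart of the matter.

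You only argue that $\Aut(C)$ is \emph{strongly Jordan} (via Jordan's theorem for $\GL_2$), and then invoke a ``standard extension argument'' to conclude. But an extension of a strongly Jordan group by a strongly Jordan group need \emph{not} be Jordan: your assembly step, as written, applies word-for-word to the case $X=Y\times\mathbb{P}^1$, where the kernel $\PGL_2(k(Y))$ is certainly strongly Jordan and yet $\Bir(Y\times\mathbb{P}^1)$ is not Jordan. The lemma of Prokhorov--Shramov you are thinking of (Klyachko's lemma) requires the kernel family to be uniformly \emph{bounded}, not merely strongly Jordan. So what must be proved is that every finite subgroup of $\Aut(C)$ has order at most some absolute constant; strongly Jordan is not enough because it does not exclude large cyclic subgroups, and those are exactly what produce the Heisenberg-type obstructions in the product case.

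The paper supplies this boundedness directly, and this is where $C(k(Y))=\varnothing$ really enters (not, as you suggest, in the equivariance step, which only needs $Y$ non-uniruled). Write $C$ as the conic $\{q=0\}$ for an anisotropic form $q$ on $V=K^3$ with $K=k(Y)$, and use that $K\supset k$ contains all roots of unity. If $u\in\OO(V,\phi)$ has finite order, every eigenvalue of $u$ is a root of unity, hence lies in $K$, so $u$ has an eigenvector $v\in V$; then $\phi(v,v)\ne 0$ by anisotropy and $\lambda^2\phi(v,v)=\phi(uv,uv)=\phi(v,v)$ force $\lambda^2=1$. Thus every finite subgroup of $\OO(V,\phi)$ is elementary abelian of order dividing $2^3$, and since $d=3$ is odd one has $\PO(V,\phi)\cong\SO(V,\phi)$, so every finite subgroup of $\Aut(C)\cong\PO(V,\phi)$ has order dividing $4$. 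With the kernel bounded, Klyachko's lemma combined with strong Jordaness of $\Bir(Y)$ (Prokhorov--Shramov for non-uniruled $Y$; here $Y$ is an abelian variety) finishes the proof. Your fixed-point\,/\,norm-torus sketch is pointing in a viable direction for the same boundedness, but you abandoned it for the $\GL_2$ Jordan bound, which is too weak.
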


We deduce Theorem \ref{BirX} from the following  more general statement.

\begin{thm}
\label{BirXY}
Let $d \ge 3$ be a positive integer, $X$ and $Y$ are smooth  irreducible projective varieties over $k$ of dimension $d$ and $d-1$ respectively. Let $f: X \to Y$ be a surjective morphism, 
whose generic fiber  is a genus zero smooth projective irreducible curve $\mathcal{X}_f$ over $k(Y)$ without   $k(Y)$-points.
Assume that $Y$ is non-uniruled. 
Then $\Bir(X)$ is  strongly Jordan. 
\end{thm}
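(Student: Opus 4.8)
The plan is to reduce the Jordan property of $\Bir(X)$ to two pieces: the structure of the abstract group $\Bir(X)$ sitting over $\Bir(Y)$ (or rather, over the automorphisms of $Y$ that the conic bundle structure forces), and the known strong Jordan property of $\Bir(Y)$ from Theorem \ref{PS} together with Remark \ref{PSr}. The key structural observation is that $f\colon X\to Y$ is, up to birational equivalence, canonically attached to $X$: since $Y$ is non-uniruled while the generic fiber $\mathcal{X}_f\cong\mathbb{P}^1_{k(Y)}$ is rational, the fibration $f$ is the maximal rationally connected (MRC) fibration of $X$, hence any birational self-map $\varphi\in\Bir(X)$ descends to a birational self-map $\underline{\varphi}\in\Bir(Y)$ making the square commute. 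This gives a group homomorphism $\pi\colon\Bir(X)\to\Bir(Y)$ whose kernel $\Bir(X/Y)$ consists of birational self-maps preserving the generic fiber, i.e.\ $\Bir(X/Y)=\Aut_{k(Y)}\bigl(k(Y)(\mathcal{X}_f)\bigr)=\mathrm{PGL}_1$ of the conic, the group of $k(Y)$-automorphisms of the genus-zero curve $\mathcal{X}_f$.

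Next I would control each of the three groups in the exact sequence $1\to\Bir(X/Y)\to\Bir(X)\xrightarrow{\pi}\Gamma\to 1$, where $\Gamma\subseteq\Bir(Y)$ is the image. For the image: $\Gamma$ is a subgroup of $\Bir(Y)$, which is strongly Jordan by Theorem \ref{PS}(1) and Remark \ref{PSr}, and a subgroup of a strongly Jordan group is strongly Jordan with the same constants. For the kernel: $\mathcal{X}_f$ is a nontrivial conic over $k(Y)$ (no $k(Y)$-point), so its automorphism group $\mathrm{Aut}_{k(Y)}(\mathcal{X}_f)$ is a $k(Y)$-form of $\mathrm{PGL}_2$, namely $\mathrm{PGL}_1(Q)$ for the relevant quaternion algebra $Q$; crucially, because the conic has no rational point, this form is \emph{anisotropic modulo its center in the relevant sense}, and its finite subgroups are severely restricted — this is exactly the phenomenon that makes $X$ behave unlike $Y\times\mathbb{P}^1$. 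I expect to invoke (or reprove) that finite subgroups of $\mathrm{Aut}_{k(Y)}(\mathcal{X}_f)$ have bounded order, or at least bounded rank together with Jordan constant, using that over a field the finite subgroups of a form of $\mathrm{PGL}_2$ are built from cyclic and dihedral (and the exceptional $A_4,S_4,A_5$) pieces, and the pointless condition kills or bounds the problematic ones.

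The hard part — and the crux of the paper — is the combination step: given a finite subgroup $\mathcal{B}\subset\Bir(X)$, I get $\mathcal{B}\cap\Bir(X/Y)$ finite in the kernel and $\pi(\mathcal{B})$ finite in $\Gamma$; strong Jordaness of each yields a commutative normal subgroup of bounded index in each, but extensions of Jordan groups by Jordan groups need not be Jordan — indeed that is precisely why $\Bir(A\times\mathbb{P}^1)$ fails. So I must exploit genuine geometry, not just the group extension: the point is that a finite group acting on the conic bundle $X$ over the abelian variety descends to act on $Y$ fixing a point (after translation, since $\Aut(Y)$ is an extension of a finite group by translations), linearizes near a fixed fiber, and the absence of a section forces the finite subgroup of the fiberwise part to stay uniformly small; concretely I would follow the Prokhorov–Shramov strategy of passing to a suitable equivariant model, analyzing the action on the Brauer/Picard data of the conic bundle, and showing the order of the "vertical" part is bounded independently of $\mathcal{B}$ because a large abelian subgroup would produce a rational section or a product structure $Y\times\mathbb{P}^1$, contradicting the hypothesis. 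Once the vertical contribution is uniformly bounded, $\mathcal{B}$ is, up to bounded index, a subgroup of $\Gamma$, and strong Jordaness of $\Bir(Y)$ finishes the proof; Theorem \ref{BirX} is then the special case $Y$ an abelian variety, where $\Aut(Y)/\!\sim$ is particularly transparent.
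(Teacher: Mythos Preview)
Your exact-sequence setup is correct and matches the paper: Lemma~\ref{Kaw} and Corollary~\ref{exact} give precisely the homomorphism $\Bir(X)\to\Bir(Y)$ with kernel $\Bir_{k(Y)}(\mathcal{X}_f)$, and you correctly cite Theorem~\ref{PS} and Remark~\ref{PSr} for the strong Jordan property of $\Bir(Y)$.

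The gap is in your treatment of the kernel and, consequently, of the combination step. You hedge between ``bounded order'' and ``bounded rank together with Jordan constant'', and then treat the extension problem as the hard geometric crux. In fact the kernel is \emph{bounded} with a tiny explicit bound: every finite subgroup of $\Aut_{k(Y)}(\mathcal{X}_f)$ has order dividing $4$ (Corollary~\ref{genuszero}). The reason is not the Klein classification of finite subgroups of $\PGL_2$---that would indeed allow arbitrarily large cyclic groups---but the observation that $K=k(Y)$ contains \emph{all} roots of unity (since $k$ is algebraically closed). A periodic element of the orthogonal group of the anisotropic ternary form attached to the conic is diagonalizable over $K$, and for any eigenvector $x$ with eigenvalue $\lambda$ one gets $\lambda^2\phi(x,x)=\phi(x,x)$; anisotropy forces $\lambda^2=1$ (Lemma~\ref{order2}). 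Since in odd dimension $\PO\cong\SO$ (Corollary~\ref{oddPO}), every finite subgroup is elementary abelian of order at most $4$.

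Once the kernel is bounded, your entire last paragraph is unnecessary. The combination step is pure group theory: Klyachko's lemma (Lemma~\ref{klyachko}, refined to Lemma~\ref{klyachko2}) says that an extension of a \emph{bounded} group by a strongly Jordan group is strongly Jordan. This is exactly what distinguishes the present situation from $\Bir(A\times\BP^1)$, where the kernel $\PGL_2(k(A))$ is Jordan but not bounded. No equivariant models, no Brauer--Picard analysis, and no argument that large abelian subgroups force a section are needed; the whole proof of Theorem~\ref{BirXY} in the paper is three lines invoking Corollary~\ref{genuszero} and Corollary~\ref{exactJ}.
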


The following assertion is a variant of Theorem \ref{BirXY}

\begin{thm}
\label{BirXYbis}
Let $d \ge 3$ be a positive integer, $X$ and $Y$ are smooth  irreducible projective varieties over $k$ of dimension $d$ and $d-1$ respectively. Let $f: X \to Y$ be a surjective morphism. Suppose that there exists a nonempty open subset $U$ of $X$ such that for all $y \in U(k)$ the corresponding fiber $X_y$ of $f$ is $k$-isomorphic  to the projective line over $k$  (i.e. the general fiber $X_y\cong\BP^1_k$).

Assume that $Y$ is non-uniruled and $f$ does not admit a rational section $Y \dasharrow X$.
Then $\Bir(X)$ is  strongly Jordan. 
\end{thm}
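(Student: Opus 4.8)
The plan is to deduce Theorem~\ref{BirXYbis} from Theorem~\ref{BirXYbis}'s ``sibling'' Theorem~\ref{BirXY} by reducing to the situation where the generic fiber is a genus zero curve without a rational point. First I would pass to the generic fiber $\mathcal{X}_f$ of $f$ over the function field $k(Y)$. By the hypothesis that the general fiber is isomorphic to $\BP^1_k$, the generic fiber $\mathcal{X}_f$ is a smooth projective geometrically irreducible curve of genus zero over $k(Y)$ (genus is constant in the family and can be read off a general fiber); in particular $\mathcal{X}_f$ is a conic, i.e.\ a form of $\BP^1$ over $k(Y)$. The content of the reduction is then the standard dichotomy for genus zero curves over a field: either $\mathcal{X}_f$ has a $k(Y)$-rational point, in which case $\mathcal{X}_f\cong\BP^1_{k(Y)}$ and hence $X$ is birational to $Y\times\BP^1$, giving a rational section $Y\dashrightarrow X$ of $f$ and contradicting the hypothesis; or $\mathcal{X}_f$ has no $k(Y)$-point, in which case the hypotheses of Theorem~\ref{BirXY} are satisfied and we conclude that $\Bir(X)$ is strongly Jordan.

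The key steps, in order, are: (1) identify the generic fiber $\mathcal{X}_f$ as a smooth projective geometrically irreducible curve over $K:=k(Y)$, using that $f$ is surjective with general fiber $\BP^1_k$ (so the geometric generic fiber is $\BP^1$, whence $\mathcal{X}_f$ has genus zero and is geometrically irreducible); (2) invoke the classical fact that a genus zero curve over a field $K$ is $K$-isomorphic to $\BP^1_K$ as soon as it has a $K$-rational point (the anticanonical, or a point, embedding realizes it as a smooth conic, and a rational point on a smooth conic forces it to be $\BP^1$); (3) translate ``$\mathcal{X}_f\cong\BP^1_{k(Y)}$'' into ``$f$ admits a rational section'': a $k(Y)$-point of the generic fiber is, essentially by definition of the generic fiber as $\varprojlim$ over dense opens of $Y$, the same datum as a rational map $Y\dashrightarrow X$ splitting $f$ over a dense open of $Y$; (4) since the hypothesis explicitly excludes a rational section, conclude $\mathcal{X}_f$ has no $k(Y)$-point, so Theorem~\ref{BirXY} applies verbatim (its remaining hypotheses --- $d\ge 3$, $X$ and $Y$ smooth irreducible projective of the right dimensions, $f$ surjective, $Y$ non-uniruled --- are all inherited directly).

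The one subtlety I would be careful about is the equivalence in step~(3) between a rational section and a $k(Y)$-point of $\mathcal{X}_f$: one direction is immediate (a rational section specializes to a $K$-point of the generic fiber), and for the converse one spreads out a $k(Y)$-point of $\mathcal{X}_f\subset X_{k(Y)}$ to a section over some dense open $V\subseteq Y$, which is exactly a rational section $Y\dashrightarrow X$; the only thing to check is that this section is genuinely a section of $f$ (i.e.\ $f\circ s=\mathrm{id}_V$), which holds because the generic fiber is cut out by $f$. A secondary point is that I should make sure the general fiber being $\cong\BP^1_k$ really does force the \emph{geometric} generic fiber to be $\BP^1$ rather than merely genus zero; this follows because the set of $y\in Y(k)$ with $X_y\cong\BP^1_k$ contains a dense open (it is the image of $U$), and a genus zero curve over $k$ is automatically $\BP^1_k$, so flatness/generic smoothness of $f$ over that open identifies the geometric generic fiber with $\BP^1$.

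I do not expect any real obstacle here: the theorem is a formal corollary of Theorem~\ref{BirXY} together with the elementary geometry of genus zero curves, and the ``hard'' analytic input (the Jordan property itself) has already been established in Theorem~\ref{BirXY}. The only place demanding mild care is the bookkeeping of step~(3) --- matching rational sections of $f$ with rational points of the generic fiber --- which is routine but worth spelling out so that the hypothesis ``$f$ does not admit a rational section'' is used exactly where it is needed.
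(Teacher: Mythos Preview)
Your proposal is correct and follows essentially the same route as the paper: use the general-fiber hypothesis (via Remark~\ref{generic-general} in the paper) to see that the generic fiber $\mathcal{X}_f$ is a smooth absolutely irreducible genus zero curve over $k(Y)$, observe that a $k(Y)$-point of $\mathcal{X}_f$ would yield a rational section of $f$ (excluded by hypothesis), and then invoke Theorem~\ref{BirXY}. Your write-up is somewhat more detailed than the paper's in spelling out the dictionary between $k(Y)$-points of the generic fiber and rational sections, but the argument is the same.
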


The paper is organized as follows.   \secref{2}  deals with  Jordaness   of groups. In \secref{3}
we remind basic properties of conic bundles over non-uniruled varieties. 
 In section (4) we  describe finite subgroups of automorphisms group of a conics without rational points.
We prove main results of the paper  in \secref{5}. 

{\bf Acknowledgements}. We are deeply grateful to the referee, whose comments helped to improve the exposition; especially, for the suggestions to include the discussion of automorphism groups of even-dimensional quadrics over function fields (see the end of Section  \ref{linalg}) and the case of non-smooth varieties (see Remark \ref{singularConics}).

\section{Group Theory}\label{2}

%%%%%%%%%%%%%%%%%%%%%%%%%%%%%
We will need the following useful result of Anton Klyachko \cite[Lemma 2.8 on p.2057]{ProSha1}. (See also \cite[Lemma 6.2]{ZarhinTG}.)

\begin{lem}
\label{klyachko}
Let $\mathcal{G}_1$ and $\mathcal{G}_2$ be families of groups such that 
$\mathcal{G}_1$ is uniformly bounded  and $\mathcal{G}_2$ is uniformly strongly Jordan. Let $\mathcal{G}$ be a family of groups $G$ such that $G$ sits in an exact sequence
$$\{1\} \to G_1 \to G \to G_2 \to \{1\},$$
where $G_1 \in \mathcal{G}_1$ and $G_2 \in \mathcal{G}_2$.  Then $\mathcal{G}$ is uniformly Jordan.
\end{lem}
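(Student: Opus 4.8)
The plan is to reduce, via the projection $\pi\colon G\to G_2$, to a purely group-theoretic statement: a finite group that is an extension of a bounded-rank abelian group by a group of bounded order contains a normal abelian subgroup of bounded index, everything ``bounded'' depending only on the two given bounds; the mechanism for the last and crucial step is the commutator pairing attached to a central extension. So fix once and for all the constant $C$ witnessing that $\mathcal{G}_1$ is uniformly bounded and the constants $\mathcal{J},\mathcal{M}$ witnessing that $\mathcal{G}_2$ is uniformly strongly Jordan. Let $G\in\mathcal{G}$ sit in $\{1\}\to G_1\to G\xrightarrow{\pi}G_2\to\{1\}$ with $G_1\in\mathcal{G}_1$, $G_2\in\mathcal{G}_2$, and let $\mathcal{B}\subset G$ be a finite subgroup. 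Put $N:=\mathcal{B}\cap G_1$; then $N\trianglelefteq\mathcal{B}$, $|N|\le C$, and $\mathcal{B}/N\cong\pi(\mathcal{B})\subset G_2$. First I would apply the strong Jordan property of $G_2$ to $\pi(\mathcal{B})$ to obtain a normal abelian subgroup $\mathcal{A}_2\trianglelefteq\pi(\mathcal{B})$ with $[\pi(\mathcal{B}):\mathcal{A}_2]\le\mathcal{J}$ and $\rank(\mathcal{A}_2)\le\mathcal{M}$, and replace $\mathcal{B}$ by $\mathcal{B}':=\pi^{-1}(\mathcal{A}_2)\cap\mathcal{B}$. Since $\mathcal{A}_2$ is normal in $\pi(\mathcal{B})$, the group $\mathcal{B}'$ is normal in $\mathcal{B}$, of index $\le\mathcal{J}$, it contains $N$, and $\mathcal{B}'/N\cong\mathcal{A}_2$.

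The extension $\{1\}\to N\to\mathcal{B}'\to\mathcal{A}_2\to\{1\}$ need not be central, so next I would pass to $K:=\mathcal{B}'\cap C_{\mathcal{B}}(N)$, the part of $\mathcal{B}'$ centralizing $N$. Because $N\trianglelefteq\mathcal{B}$, both $\mathcal{B}'$ and $C_{\mathcal{B}}(N)$ are normal in $\mathcal{B}$, hence so is $K$; and $[\mathcal{B}':K]\le[\mathcal{B}:C_{\mathcal{B}}(N)]\le|\Aut(N)|\le (C-1)!$. Now $K\cap N=Z(N)$ lies in the center of $K$, has order $\le C$, and (being characteristic in $N\trianglelefteq\mathcal{B}$) is normal in $\mathcal{B}$; moreover $Q:=K/Z(N)$ embeds into $\mathcal{B}'/N\cong\mathcal{A}_2$, so $Q$ is abelian of rank $\le\mathcal{M}$. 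Thus $K$ is a central extension of a bounded-rank abelian group by a group of bounded order.

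For the final step I would use that, since $Z(N)\subset Z(K)$ and $Q$ is abelian, the commutator map on $K$ descends to a well-defined alternating bi-multiplicative pairing $c\colon Q\times Q\to Z(N)$ (the values lie in $Z(N)$ because $[K,K]\subset\ker(K\to Q)$, and they do not depend on the chosen lifts because $Z(N)$ is central). Let $R:=\{x\in Q:\ c(x,y)=1\ \text{for all }y\in Q\}$ be its radical, and let $\mathcal{A}\subset K$ be the preimage of $R$. Then $\mathcal{A}$ is abelian, since $[a,a']=c(\bar a,\bar a')=1$ whenever $\bar a,\bar a'\in R$; and $\mathcal{A}$ is normal in $\mathcal{B}$, since conjugation by any $b\in\mathcal{B}$ is a group automorphism preserving $K$ and $Z(N)$, hence acting on $Q$ and on $Z(N)$ compatibly with $c$, and therefore preserving $R$. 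Finally $c$ induces a non-degenerate alternating pairing on $Q/R$ valued in $Z(N)$; choosing at most $\mathcal{M}$ generators $\bar g_1,\dots,\bar g_s$ of $Q/R$, the homomorphism $\bar x\mapsto(c(\bar g_i,\bar x))_{i}$ embeds $Q/R$ into $Z(N)^{\mathcal{M}}$ by non-degeneracy, so $[K:\mathcal{A}]=|Q/R|\le C^{\mathcal{M}}$. Combining the three estimates,
\[ [\mathcal{B}:\mathcal{A}]=[\mathcal{B}:\mathcal{B}']\cdot[\mathcal{B}':K]\cdot[K:\mathcal{A}]\le\mathcal{J}\cdot(C-1)!\cdot C^{\mathcal{M}}, \]
a bound depending only on $C,\mathcal{J},\mathcal{M}$; hence $\mathcal{G}$ is uniformly Jordan with $\mathcal{J}':=\mathcal{J}\cdot(C-1)!\cdot C^{\mathcal{M}}$.

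The step I expect to be the main obstacle is arranging that the abelian subgroup produced at the very end is normal in all of $\mathcal{B}$, not merely in $\mathcal{B}'$ or in $K$. This forces the two design choices above: one must cut $\mathcal{B}$ down using the preimage of a subgroup that is \emph{normal} in $\pi(\mathcal{B})$ (not merely abelian), and one must define $\mathcal{A}$ intrinsically, as the preimage of the radical of the $\mathcal{B}$-equivariant pairing $c$, rather than as the preimage of some arbitrarily chosen maximal isotropic subgroup of $Q$. A lesser point is the bookkeeping of uniformity: every index bound must be expressed through $C,\mathcal{J},\mathcal{M}$ alone, which is precisely why crude estimates such as $|\Aut(N)|\le (C-1)!$ and $|Q/R|\le C^{\mathcal{M}}$ are exactly what is needed and nothing sharper is required.
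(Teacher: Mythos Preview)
The paper does not actually prove this lemma: it records it as a known result of Klyachko, citing \cite[Lemma~2.8]{ProSha1} (and \cite[Lemma~6.2]{ZarhinTG}). So there is no in-paper argument to compare against; your proposal supplies a self-contained proof where the paper simply quotes the statement.

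Your argument is correct. The three reductions---pulling back a normal abelian subgroup of bounded rank from $G_2$, passing to the centralizer of $N$ to make the extension central, and then cutting down to the radical of the commutator pairing $c\colon Q\times Q\to Z(N)$---are exactly the right moves, and you have been careful about the one genuinely delicate point, namely normality in the full group $\mathcal{B}$ at every stage. The key observations that $C_{\mathcal{B}}(N)\trianglelefteq\mathcal{B}$ (centralizer of a normal subgroup), that $Z(N)$ is characteristic in $N$ hence normal in $\mathcal{B}$, and that conjugation by $\mathcal{B}$ acts compatibly on $(Q,Z(N),c)$ so preserves the radical $R$, are all in order. The bound $|Q/R|\le|Z(N)|^{\mathcal{M}}\le C^{\mathcal{M}}$ via the injection $\bar x\mapsto(c(\bar g_i,\bar x))_i$ is clean and uses precisely the bounded-rank hypothesis on $\mathcal{A}_2$ (and hence on its subquotient $Q/R$). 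Your final constant $\mathcal{J}\cdot(C-1)!\cdot C^{\mathcal{M}}$ depends only on $C,\mathcal{J},\mathcal{M}$, as required for uniformity.
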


Actually, we will use the following slight refinement of Lemma \ref{klyachko}.

\begin{lem}
\label{klyachko2}
In the notation and assumptions of Lemma \ref{klyachko} the family $\mathcal{G}$ is uniformly strongly Jordan.
\end{lem}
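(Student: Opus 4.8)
The plan is to keep the entire argument of Lemma \ref{klyachko} and simply track the ranks of abelian subgroups in addition to their Jordan constants. By hypothesis $\mathcal{G}_1$ is uniformly bounded, say by a constant $C$, and $\mathcal{G}_2$ is uniformly strongly Jordan, say with Jordan constant $\mathcal{J}_2$ and rank bound $\mathcal{M}_2$. Lemma \ref{klyachko} already gives a uniform Jordan constant $\mathcal{J}$ for the whole family $\mathcal{G}$, so it remains only to produce a uniform bound $\mathcal{M}$ on the number of generators of any finite abelian subgroup of any $G \in \mathcal{G}$.

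So let $G \in \mathcal{G}$ sit in the exact sequence $\{1\}\to G_1 \to G \to G_2 \to \{1\}$ with $|G_1|\le C$ and $G_2$ strongly Jordan with the constants above, and let $\mathcal{A}\subseteq G$ be a finite abelian subgroup. Write $\mathcal{A}_1 := \mathcal{A}\cap G_1$ and let $\bar{\mathcal{A}} := \mathcal{A}/\mathcal{A}_1$ be its image in $G_2$; thus there is an exact sequence $\{1\}\to \mathcal{A}_1 \to \mathcal{A} \to \bar{\mathcal{A}}\to\{1\}$ of finite abelian groups. The subgroup $\mathcal{A}_1$ has order at most $C$, hence $\rank(\mathcal{A}_1)\le \log_2 C$. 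The image $\bar{\mathcal{A}}$ is a finite abelian subgroup of the strongly Jordan group $G_2$, so $\rank(\bar{\mathcal{A}})\le \mathcal{M}_2$. For an extension of abelian groups the number of generators of the middle term is at most the sum of those of the sub and the quotient, so
\[
\rank(\mathcal{A}) \le \rank(\mathcal{A}_1) + \rank(\bar{\mathcal{A}}) \le \log_2 C + \mathcal{M}_2 =: \mathcal{M}.
\]
Since $\mathcal{M}$ depends only on $C$ and $\mathcal{M}_2$, and not on the particular $G$ or $\mathcal{A}$, the family $\mathcal{G}$ has finite abelian subgroups of uniformly bounded rank. Combined with the uniform Jordan property already supplied by Lemma \ref{klyachko}, this shows $\mathcal{G}$ is uniformly strongly Jordan.

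There is essentially no obstacle here: the only mild point is the elementary fact that rank is subadditive in short exact sequences of finite abelian groups, which follows at once by lifting generators of the quotient and adjoining generators of the sub. One could alternatively phrase everything at the level of a single group (dropping the word "uniformly" and the calligraphic constants) to recover the non-uniform refinement, but the uniform version is what is needed later, so I would state it as above.
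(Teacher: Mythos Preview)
Your proof is correct and follows essentially the same approach as the paper: invoke Lemma~\ref{klyachko} for the uniform Jordan property, and then bound the rank of finite abelian subgroups via the short exact sequence. The only difference is that the paper cites Lemma~2.7 of \cite{ProSha1} for the rank bound in extensions, whereas you spell out that elementary subadditivity argument directly; your version is self-contained and the explicit bound $\mathcal{M}=\lfloor\log_2 C\rfloor+\mathcal{M}_2$ is a small bonus.
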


\begin{proof}
The assertion follows readily from Lemma \ref{klyachko} combined with Lemma 2.7 of \cite[p. 2057]{ProSha1} about extensions of groups with uniformly bounded ranks.
\end{proof}

\begin{cor}
\label{abelianUniform}
Let $d$ be a positive integer.
Let $\mathcal{G}_1$ be a uniformly bounded family of groups.  Let $\mathbf{A}_d$ be the family of groups $\Bir(A)$ where $A$ runs through the set of all $d$-dimensional abelian varieties over $k$. Let $\mathcal{G}$ be a family of groups $G$ such that there exists an exact sequence
$$\{1\} \to G_1 \to G \to G_2 \to \{1\},$$
where $G_1 \in \mathcal{G}_1$ and $G_2 \in \mathbf{A}_d$.  Then $\mathcal{G}$ is uniformly Jordan.
\end{cor}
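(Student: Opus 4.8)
The plan is to observe that this corollary is the special case of Lemma~\ref{klyachko2} in which the uniformly strongly Jordan family $\mathcal{G}_2$ is taken to be $\mathbf{A}_d$, so essentially nothing remains to be proved once the right inputs are in place. The one non-formal ingredient is that $\mathbf{A}_d$ really is a uniformly strongly Jordan family; this is exactly \cite[Corollary 2.15 on p.~2058]{ProSha1}, recalled in Subsection~\ref{subsect:1.3}. (Concretely, it reflects the fact that $\Bir(A)=\Aut(A)$ is an extension of the finite group $\Aut(A,0)$ of automorphisms fixing the origin --- which acts faithfully on $H^1$ of $A$, hence has finite subgroups of order bounded in terms of $d$ only, by a Minkowski-type bound --- by the group $A(k)$, whose finite subgroups are products of at most $2d$ cyclic groups; so both the Jordan constant and the rank bound depend on $d$ alone.)

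Granting this, I would argue as follows. Put $\mathcal{G}_2 := \mathbf{A}_d$, which by the above is uniformly strongly Jordan, and keep $\mathcal{G}_1$ as the given uniformly bounded family. By hypothesis, every $G\in\mathcal{G}$ sits in an exact sequence $\{1\}\to G_1\to G\to G_2\to\{1\}$ with $G_1\in\mathcal{G}_1$ and $G_2\in\mathbf{A}_d=\mathcal{G}_2$. Thus $\mathcal{G}$ satisfies precisely the hypotheses of Lemma~\ref{klyachko2}, which gives that $\mathcal{G}$ is uniformly strongly Jordan; in particular it is uniformly Jordan, which is the assertion of the corollary. (One could invoke the weaker Lemma~\ref{klyachko} and already obtain the stated conclusion; using Lemma~\ref{klyachko2} just records the slightly stronger fact for free.)

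There is essentially no genuine obstacle here --- all the work has been absorbed into Lemma~\ref{klyachko2} and into the cited uniform strong Jordan property of $\mathbf{A}_d$. The only point deserving a moment's care is the bookkeeping of the extension: it is the normal subgroup $G_1$ that must come from the uniformly bounded family and the quotient $G_2$ that must lie in $\mathbf{A}_d$, matching the roles of $\mathcal{G}_1$ and $\mathcal{G}_2$ in Lemma~\ref{klyachko2}; this is indeed the way the hypotheses of the corollary are phrased, so the invocation is legitimate.
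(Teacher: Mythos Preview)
Your proof is correct and matches the paper's own argument essentially verbatim: set $\mathcal{G}_2:=\mathbf{A}_d$, invoke \cite[Corollary~2.15]{ProSha1} to get that $\mathbf{A}_d$ is uniformly strongly Jordan, and apply Lemma~\ref{klyachko2}. Your additional remarks (the structure of $\Bir(A)$ and the option of using Lemma~\ref{klyachko}) are accurate but not needed.
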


\begin{proof}
One has only to recall that $\mathcal{G}_2:=\mathbf{A}_d$ is uniformly strongly Jordan \cite[Corollary 2.15 on p. 2058]{ProSha1} and apply Lemma \ref{klyachko2}.
\end{proof}

%changed on July 2, 2016 %%%%%%%%%%%%
\begin{lem}
\label{bound}
Let $G$ be a strongly Jordan group and let $H$ be a subgroup of $G$.
 Suppose that there exists a positive integer $N_H$ such that every periodic element  in $H$ has order that does not exceed $N_H$.

Then $H$ is bounded.
\end{lem}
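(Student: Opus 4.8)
The plan is to show that the order of an arbitrary finite subgroup $\mathcal{B}$ of $H$ is bounded by a constant depending only on the Jordan constant $J_G$, the rank constant $m_G$ of the strongly Jordan group $G$, and on $N_H$. So first I would fix a finite subgroup $\mathcal{B}\subseteq H$. Since $H$ is a subgroup of $G$, $\mathcal{B}$ is also a finite subgroup of $G$, and the (strong) Jordan property of $G$ provides a commutative subgroup $\mathcal{A}\subseteq\mathcal{B}$ that is normal in $\mathcal{B}$ with $[\mathcal{B}:\mathcal{A}]\le J_G$; moreover, since $G$ has finite abelian subgroups of bounded rank, $\rank(\mathcal{A})\le m_G$.

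The next step is to bound $|\mathcal{A}|$. Every element of $\mathcal{A}$ lies in $\mathcal{B}\subseteq H$ and has finite order, hence is a periodic element of $H$, so its order is at most $N_H$. Since $\mathcal{A}$ is a finite abelian group, its exponent is realized as the order of a single element of $\mathcal{A}$ (the elementary fact that in a finite abelian group the exponent equals the maximal order of an element), and therefore $\exp(\mathcal{A})\le N_H$. A finite abelian group that can be generated by $m_G$ elements and has exponent dividing $N_H$ is a quotient of $(\BZ/N_H\BZ)^{m_G}$, so $|\mathcal{A}|\le N_H^{m_G}$.

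Combining the two bounds gives
\[
|\mathcal{B}|=[\mathcal{B}:\mathcal{A}]\cdot|\mathcal{A}|\le J_G\cdot N_H^{m_G}.
\]
Since $\mathcal{B}$ was an arbitrary finite subgroup of $H$, this shows that $H$ is bounded, with constant $C_H=J_G\,N_H^{m_G}$.

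I do not expect any genuine obstacle here: the argument is a direct combination of the strong Jordan hypothesis with the bounded-exponent hypothesis. The only point worth isolating explicitly is the passage from "every element of $\mathcal{A}$ has order $\le N_H$" to "$\exp(\mathcal{A})\le N_H$", which uses the structure of finite abelian groups; once that is in hand, the rank bound turns it into the order bound $|\mathcal{A}|\le N_H^{m_G}$, and the Jordan index bound finishes the proof.
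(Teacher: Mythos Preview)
Your proof is correct and follows essentially the same route as the paper's: pass from $\mathcal{B}$ to the normal abelian subgroup $\mathcal{A}$ of index at most $J_G$, bound $|\mathcal{A}|$ by $N_H^{m_G}$ using that $\mathcal{A}$ has at most $m_G$ generators each of order at most $N_H$, and conclude $|\mathcal{B}|\le J_G\cdot N_H^{m_G}$. Your explicit remark about the exponent of $\mathcal{A}$ and the quotient-of-$(\BZ/N_H\BZ)^{m_G}$ description is a bit more detailed than the paper's one-line bound, but the argument is the same.
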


\begin{proof}
%Clearly, $H$ is strongly Jordan.
 Let $J_G$ be the Jordan index of $H$. We know that there is a positive integer $m_G$ such that
every finite abelian subgroup in $G$ is generated by, at most, $m_G$ elements.

 Let $\Bc$ be a finite subgroup  of $H$. Clearly, $\Bc$ is a sugroup of $G$ as well.
Then $\Bc$ contains a finite abelian subgroup $\Ac$ with index
$[\Bc:\Ac]\le J_G$. The abelian  group $\Ac$ is generated by, at most, $m_G$ elements, each of which has order $\le N_H$. This implies that
$|\Ac| \le N_H^{m_G}$ and therefore 
$$|\Bc| \le J_H \cdot |\Ac| \le J_G \cdot N_H^{m_G}.$$
\end{proof}

Recall \cite{TurullM} that  the matrix group $\GL(n,\BC)$ is {\sl strongly} Jordan and its every finite abelian subgroup
is generated by, at most, $n$ elements.
 This implies that 
for any field $K$ of characteristic zero the matrix group
$\GL(n,K)$ is a strongly Jordan group
  with Jordan index $J_{\GL(n,\BC)}$ (see \cite[Sect. 1.2.2 on p. 187]{Popov2}); in addition, its every finite abelian subgroup
is generated by, at most, $n$ elements.
%if $\mathbf{G}$ is an {\sl affine algebraic group} over a field $K$ of characteristic zero then 
%the group $\mathbf{G}(K)$ of its $K$-points is {\sl strongly Jordan} (see \cite[Sect. 1.2.2 on p. 187]{Popov2}).
 Combining this observation with Lemma \ref{bound} and the last formula of its proof, 
we obtain the following assertion that may be of independent interest.

\begin{thm}
\label{boundA}
Let   $K$ be a field  of characteristic zero and $n$  a positive integer.
 Suppose that  $H$ is a subgroup of $\GL(n,K)$ and $N$ is a positive integer
such that   every periodic element in $H$ has order $\le N$.  Then there exist 
a positive integer $\mathbf{N}=\mathbf{N}(n,N)$ that depends only on $n$ and $N$,
and such that every finite subgroup in $H$ has order $\le \mathbf{N}$.
In particular,  $H$ is bounded.
\end{thm}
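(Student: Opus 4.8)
The plan is to deduce Theorem~\ref{boundA} directly from Lemma~\ref{bound}, applied to the ambient group $\GL(n,K)$, while keeping track of the explicit constants that occur in the proof of that lemma.

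First I would record the two facts about $\GL(n,K)$ that make this work, both quoted in the text above: for any field $K$ of characteristic zero, $\GL(n,K)$ is strongly Jordan with Jordan index $J_{\GL(n,\BC)}$ --- a number depending on $n$ alone, by Jordan's classical theorem --- and every finite abelian subgroup of $\GL(n,K)$ is generated by at most $n$ elements. So in the notation of Definition~\ref{def:jordan} one may take, for $G=\GL(n,K)$, the constants $J_G=J_{\GL(n,\BC)}$ and $m_G=n$, and crucially neither of them depends on $K$.

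Next, take $H\subseteq\GL(n,K)$ as in the statement and set $N_H=N$, so that every periodic element of $H$ has order $\le N_H$. Lemma~\ref{bound} then applies and shows that $H$ is bounded. To obtain the effective bound $\mathbf{N}(n,N)$, I would simply revisit the last display of the proof of Lemma~\ref{bound}: given a finite subgroup $\mathcal{B}\subseteq H$, it contains a normal abelian subgroup $\mathcal{A}$ with $[\mathcal{B}:\mathcal{A}]\le J_G$; since $\mathcal{A}$ is generated by at most $m_G=n$ elements, each of order $\le N$, we get $|\mathcal{A}|\le N^n$, hence
$$|\mathcal{B}|\le J_G\cdot|\mathcal{A}|\le J_{\GL(n,\BC)}\cdot N^{n}.$$
Thus the theorem holds with $\mathbf{N}(n,N):=J_{\GL(n,\BC)}\cdot N^{n}$, which depends only on $n$ and $N$, as required.

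There is no real obstacle here: this is a bookkeeping corollary of Lemma~\ref{bound}. The one input deserving a citation --- and the only place where the characteristic-zero hypothesis is genuinely used --- is the assertion that the Jordan index and the abelian-rank bound for $\GL(n,K)$ can be chosen independently of the characteristic-zero field $K$, equal to those for $\GL(n,\BC)$; this is recorded in \cite[Sect. 1.2.2 on p. 187]{Popov2}, the point being that any finite subgroup of $\GL(n,K)$ is already defined over a finitely generated subfield of $K$, which embeds into $\BC$.
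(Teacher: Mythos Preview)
Your proof is correct and is essentially identical to the paper's own argument: the paper states just before Theorem~\ref{boundA} that the result follows by combining the strong Jordan property of $\GL(n,K)$ (with constants $J_{\GL(n,\BC)}$ and $n$, as recorded in \cite{Popov2}) with Lemma~\ref{bound} and the last displayed formula of its proof, yielding the same explicit bound $\mathbf{N}(n,N)=J_{\GL(n,\BC)}\cdot N^{n}$ that you obtain.
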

%%%%%%%%%%%%%%%%%%%%%%June 27-July 2%%%%%%%%%%%%%%%%
%\begin{prop}
%\label{quotient}
%Let $\mathbf{G}$ be an {\sl affine algebraic group} over a field $K$ of characteristic zero and  $\mathbf{H}$ a closed normal $K$-algebraic subgroup of $\mathbf{G}$. Suppose that there is a positive integer $N$ such that every periodic element in $\mathbf{G}(K)/\mathbf{H}(K)$ has order $\le N$.
%Then the group  $\mathbf{G}(K)/\mathbf{H}(K)$ is bounded.
%\end{prop}

%\begin{proof}
%Let us consider the {\sl quotient} $\mathbf{G}/\mathbf{H}$, which is an affine algebraic group of $K$. Therefore the group $(\mathbf{G}/\mathbf{H})(\mathcal{K})$ is  strongly Jordan. The quotient
%$\mathbf{G}(\mathcal{K})/\mathbf{H}(\mathcal{K})$ is isomorphic to a subgroup of $(\mathbf{G}/\mathbf{H})(\mathcal{K})$ and therefore is also strongly Jordan. Now the result follows from Lemma \ref{bound}.
%\end{proof}

\section{Conic bundles}\label{3}
\label{conic}

Let $f: X \to Y$ be a surjective morphism  of smooth irreducible projective varieties of positive dimension over  $k$. Since $X$ and $Y$ are projective, $f$ is a {\sl projective} morphism. 
 It is well known \cite[Ch. III, Sect. 10, Cor. 10.7]{H} that there is an open Zariski dense  subset $U=U(f)$ of $Y$ such that the restriction $f^{-1}(U)\overset{f}{\to} U$ is smooth  (\cite[Ch. III, Sect. 10, Cor. 10.7]{H})  and  flat    (\cite [Lect. 8,  $2^{\circ}$] {MumfordLC}, \cite[Theorem 6.9.1] {EGA42}). Thus   the {\sl generic fiber} $\mathcal{X}:=\mathcal{X}_f$ is a smooth projective  variety over $k(Y)$ and all its irreducible components have dimension $\dim(X)-\dim(Y)$.
 (\cite[Ch. III, Sect. 9, Corollary 9.6]{H}), (\cite[Ch. III, Sect. 10, Prop. 10.1]{H}). In addition, if $y$ is a closed point of $U$ then the corresponding fiber $X_y$ of $f$ is a smooth projective variety over the field $k(y)=k$ and all its irreducible components have dimension $\dim(X)-\dim(Y)$.

Notice that {\sl dominant} $f$ defines the field embedding
$$f^{*}: k(Y) \hookrightarrow k(X)$$
that is the identity map on $k$. Further we will identify $k(Y)$ with its image in $k(X)$.
 The field of rational functions  of $\mathcal{X}_f$  coincides with $k(X)$ and the group of birational automorphisms $\Bir_{k(Y)}(\mathcal{X}_f)$ coincides with (sub)group 
\begin{equation}\label{kernel}
\Aut(k(X)/k(Y))\subset \Aut(k(X)/k)=\Bir_k(X)\end{equation}
that consists of all  automorphisms of the field $k(X)$  leaving invariant every element of $k(Y)$.

We say that $X$ is a  {\sl conic bundle} over $Y$ if the generic fiber 
$\mathcal{X}:=\mathcal{X}_f$ is an absolutely irreducible genus $0$ curve over $k(Y)$. (See \cite{Sarkisov1,Sarkisov2}.) In particular,
$\dim(X)-\dim(Y)=1$ and therefore  the general fiber of $f$ is a (smooth projective) curve.

\begin{rem}\label{generic-general}
As usual, by the {\sl general  fiber} of $f$  we mean the fiber  $X_y$ of $f$ over a point $y$ from some  nonempty open subset of $Y.$
If the generic fiber is an irreducible  smooth  projective curve then there is an open nonempty subset $U$ of $Y$ such that for all closed
points $y \in U$ the corresponding
 (closed) fibers $X_y$ are irreducible smooth  projective curves over $k(y)=k$ as well (\cite[Corollary 9.5.6, Proposition 9.7.8]{EGA43}). Semi-continuity Theorem (\cite[Ch III, Theorem  12.8]{H},\cite [Corollary, p.47]{MumfordAV} implies that  the general fiber has genus  zero if and only if the
same is true for the generic fiber.  Thus, the condition that the generic fiber is a smooth irreducible curve of genus zero is in our setting equivalent to the same condition for the general fiber.
\end{rem}

\begin{rem}
%\label{product}
\label{productYP}
If the genus $0$ curve  $\mathcal{X}_f$ has a $k(Y)$-rational point then  it  is biregular to the projective line over $k(Y)$ \cite[Th.  A.4.3.1 on p. 75]{HindryS}.
 This implies that  $X$ is $k$-birational   to $Y \times \P^1$.   It follows from \cite{ZarhinPEMS}  that if $Y$ is an abelian variety (of positive dimension) then
$\Bir(X)$ is {\sl not} Jordan.
%provided  $\mathcal{X}_f$ has a $k(Y)$-rational point
%provided $Y$ is an abelian variety of positive dimension
 
\end{rem}

\begin{ex}
\label{aniquad}
Let us consider a smooth projective plane quadric
$$\mathcal{X}_q=\{a_1 T_1^2 +a_2 T_2^2+a_3 T_3^2=0\}\subset{\mathbb{P}^2}_{k(Y)}$$
over the field $K:=k(Y)$
where all $a_i$ are {\sl nonzero} elements of $k(Y)$ such that the nondegenerate ternary quadratic form
$$q(T)=a_1 T_1^2 +a_2 T_2^2+a_3 T_3^2$$
in $T=(T_1,T_2,T_3)$
is {\sl anisotropic} over $k(Y)$, i.e., $q(T)\ne 0$ if  all $T_i \in k(Y)$ and, at least, one of them is not $0$ in $k(Y)$.  (It follows from \cite[Th. 1 on p. 155]{Kahn}  that such a form  exists if and only if $2^{\dim(Y)}\ge 3$, i.e., if and only if $\dim(Y)\ge 2$.)
Clearly, $\mathcal{X}_q$ is an absolutely irreducible smooth projective curve of genus $0$ over $K$ that does {\sl not} have $K$-points.

We want to construct a conic bundle with generic fiber  $\mathcal{X}_q$ 
without $K$-rational point.
First, let us consider
the field $K(\mathcal{X}_q)$ of the rational functions on $\mathcal{X}_q$. It is finitely generated  over $K$ and has transcendence degree $1$ over it. This implies that  $K(\mathcal{X}_q)$  is finitely generated over $k$ and has transcendence degree $\dim(Y)+1$ over it.  Since $\mathcal{X}_q$ is absolutely irreducible over $K$, the latter is algebraically closed in $K(\mathcal{X}_q)$.

By Hironaka's results, there is an irreducible smooth projective variety $X$ over $k$ of dimension $\dim(Y)+1$ with $k(X)=K(\mathcal{X}_q)$  such that
the dominant rational map $f:X \to Y$ induced by the field embedding $k(Y)=K \subset K(\mathcal{X}_q)=k(X)$ is actually a morphism. Clearly,
the generic fiber $\mathcal{X}_f$ is a smooth projective variety, all whose irreducible components have dimension $1$. Since
 $K$ is algebraically closed in $K(\mathcal{X}_q)=k(X)$,  the curve $\mathcal{X}_f$ is absolutely irreducible over $K$
%, thanks to a Grothendieck's version of Zariski's connectedness theorem
\cite[Cor. 4.3.7 and Remark 4.3.8]{EGA31}. On the other hand, the field $K(\mathcal{X}_f)$ of rational functions on the $K$-curve 
$\mathcal{X}_f$ coincides with $k(X)$ by the very definition of the generic fiber. Since $K(\mathcal{X}_f)=k(X)=K(\mathcal{X}_q)$, the $K$-curves
$\mathcal{X}_f$ and $\mathcal{X}_q$
are birational. Taking into account that both curves are smooth projective and absolutely irreducible over $K$, we conclude that $\mathcal{X}_f$
and $\mathcal{X}_q$ are biregularly isomorphic over $K$. This implies that $\mathcal{X}_f$ has genus zero and has no  $k(Y)$-rational points.  Thus  $f:X \to Y$ is the  conic bundle  we wanted to construct.

%that $f:X \to Y$ is a conic bundle, whose generic fiber is $k(Y)$-biregular
 %to $\mathcal{X}_q$ and therefore has {\sl no} $k(Y)$-rational points.

\end{ex}

\begin{lem}\label{Kaw}
Let $X$ and $Y$ be smooth irreducible projective varieties of positive dimension over $k,$
 $f: X \to Y$ be a surjective   morphism, such that the general fiber $F_y=f^{-1}(y)$ 
is isomorphic to  $ \BP^1_k.$ Let us identify $k(Y)$ with its image in $k(X)$. 
 Assume additionally that $Y$is non-uniruled. (E.g., $Y$ is an abelian variety.)

Then every $k$-linear automorphism $\sigma$ of the field $k(X)$ leaves invariant $k(Y)$, i.e.,
$$\sigma (k(Y))=k(Y), \   \forall \sigma \in \Aut(k(X)).$$
In addition, there is exactly one birational  automorphism $u_Y$ of $Y$, whose action on $k(Y)$ coincides with $\sigma$.
\end{lem}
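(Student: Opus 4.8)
The plan is to use the MRC (maximal rationally connected) fibration, together with the non-uniruledness of $Y$, to rigidify the fibration $f$ birationally and deduce functoriality. First I would observe that, since the general fiber of $f$ is $\BP^1_k$, the morphism $f$ exhibits $X$ as a variety whose MRC fibration has base birational to $Y$. More precisely, the maximal rationally connected fibration $\pi\colon X\dasharrow R(X)$ is characterized up to birational equivalence by the property that $R(X)$ is non-uniruled and the general fiber of $\pi$ is rationally connected (Graber--Harris--Starr, Kollár); since $Y$ is non-uniruled and the general fiber of $f$ is $\BP^1_k$ (hence rationally connected), the morphism $f$ \emph{is} an MRC fibration for $X$, so $R(X)\sim Y$ canonically. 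I would then invoke the fact that the MRC fibration is functorial under birational (indeed dominant rational) maps: any $\sigma\in\Bir(X)=\Aut(k(X))$ must carry the MRC fibration to itself, hence induces a birational self-map $u_Y$ of the base $Y$ with $f\circ\sigma = u_Y\circ f$ as rational maps. In terms of function fields, $f^*\colon k(Y)\hookrightarrow k(X)$ has image exactly the subfield of functions ``constant along rationally connected fibers,'' and this subfield is preserved by every element of $\Aut(k(X))$; this is precisely the assertion $\sigma(k(Y))=k(Y)$.

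The cleanest way to package this is probably the following. Let $k(Y)\subset k(X)$ be the given subfield (of transcendence degree $\dim Y = d-1$ over $k$). I claim $k(Y)$ is intrinsically characterized as the set of all $h\in k(X)$ that lie in the image of $g^*$ for \emph{some} dominant rational map $g\colon X\dasharrow Z$ to a non-uniruled variety $Z$ — equivalently, $k(Y)$ is the unique maximal subfield $L$ of $k(X)$ containing $k$ such that $L$ is the function field of a non-uniruled variety and $k(X)$ is the function field of a rationally connected (here: genus-zero) curve over $L$. Uniqueness of the MRC fibration gives that this characterization picks out $k(Y)$ and nothing larger. Since an abstract $k$-automorphism $\sigma$ of $k(X)$ sends dominant rational maps to non-uniruled targets to dominant rational maps to non-uniruled targets (uniruledness being a birational invariant), $\sigma$ must preserve this distinguished subfield: $\sigma(k(Y)) = k(Y)$.

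Once invariance is established, the ``in addition'' part is immediate: the restriction $\sigma|_{k(Y)}$ is a $k$-linear field automorphism of $k(Y)$, hence by $\Bir(Y)=\Aut_k(k(Y))$ corresponds to a unique $u_Y\in\Bir(Y)$; and $u_Y$ is unique because a birational automorphism of $Y$ is determined by its action on $k(Y)$. Compatibility $f\circ\sigma\sim u_Y\circ f$ then holds by construction, since $f^*$ is the inclusion $k(Y)\hookrightarrow k(X)$ and $\sigma\circ f^* = f^*\circ u_Y^*$ by the very definition of $u_Y^* := \sigma|_{k(Y)}$.

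The main obstacle — or at least the point requiring the most care — is justifying that $f$ really is (birational to) the MRC fibration of $X$ and invoking its uniqueness/functoriality correctly over an algebraically closed field of characteristic zero. The subtle point is that the MRC quotient is a priori only a rational map, and one must know (i) that a non-uniruled base forces $f$ to coincide birationally with the MRC fibration rather than being an intermediate quotient, which follows because an intermediate fibration $X\dasharrow Y\dasharrow R(X)$ with $R(X)$ a further non-uniruled quotient would force the general fiber of $Y\dasharrow R(X)$ to be both positive-dimensional rationally connected and a quotient inside the non-uniruled $Y$ — impossible unless $Y\sim R(X)$; and (ii) that $\Aut(k(X))$ acts on the MRC data, which is the functoriality statement: a birational self-map of $X$ sends rational curves to rational curves and hence respects the equivalence relation ``joined by a chain of rational curves,'' whose quotient is the MRC base. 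I would cite Kollár's ``Rational Curves on Algebraic Varieties'' (IV.5) and Graber--Harris--Starr for these inputs and keep the argument short, since the genus-zero hypothesis makes the rationally connected fibers as simple as possible.
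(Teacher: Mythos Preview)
Your proof is correct but follows a genuinely different route from the paper's. You invoke the maximal rationally connected fibration and its functoriality (Koll\'ar, \emph{Rational Curves}, IV.5.5): since the general fiber of $f$ is $\BP^1$ and $Y$ is non-uniruled, $f$ realizes the MRC fibration of $X$, so the subfield $k(Y)\subset k(X)$ is intrinsic and preserved by every $\sigma\in\Aut_k(k(X))$. The paper, by contrast, gives a deliberately elementary argument avoiding the MRC machinery: it resolves the indeterminacy of the birational self-map $u_X$ corresponding to $\sigma$ to obtain two morphisms $\tilde f, g:\tilde X\to Y$, observes directly that the $\BP^1$-fibers of each map to points under the other (because $Y$ is non-uniruled), and then applies Kawamata's Lemma (Iitaka, Lemma~10.7) twice to produce mutually inverse rational maps $h_1,h_2:Y\dasharrow Y$, giving $u_Y$. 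In fact the paper explicitly remarks, immediately after its proof, that the MRC-fibration theorem would yield the lemma as well, but that ``our particular case is much easier.'' Your approach buys conceptual clarity and a one-line reduction to a known structural result; the paper's approach buys self-containment and lighter prerequisites, using only that non-uniruled varieties receive no non-constant maps from $\BP^1$ together with a classical factorization lemma. One small point: your characterization of $k(Y)$ as the union of images $g^*(k(Z))$ over maps to non-uniruled $Z$ implicitly uses the universal property of the MRC quotient (every such $g$ factors through $R(X)$), so you are really using Koll\'ar IV.5 in both paragraphs rather than giving an independent description; this is fine, but you might as well state the functoriality once and be done. You do not actually need Graber--Harris--Starr here, since $Y$ is \emph{given} to be non-uniruled and $R(Y)=Y$ follows immediately.
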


\begin{proof}

There is a birational automorphism $u_X$ of $X$ that induces $\sigma$ on $k(X)$.
Let $\tilde X\overset{\pi}{\to} X$ be  a resolution of indeterminancy of $u_X,$ i.e.,  
we consider a smooth irreducible  projective $k-$variety  $\tilde X,$  and birational   morphisms 
$$\pi,  \tilde u_X :\tilde{X} \to X$$
that enjoy the following properties.
\begin{itemize}\item
 $\pi^{-1}: X \dasharrow \tilde X$ is an isomorphism outside the indeterminancy locus of $u_X;$
\item
 the following diagram commutes:

\begin{equation}\label{diagram1006}\begin{aligned}
& &   &\tilde X       &  {}  & {}        &   && \notag \\
& &\pi& \downarrow  &        {\searrow}                     &  \tilde {u_X}  & &&\notag \\
& & &X       &\stackrel{u_X}{\dashrightarrow } &X  & &&  \\ 
& &f & \downarrow  &  {}&\downarrow  f& &&\notag \\
& &  &Y           &{} &Y  &&    &  
\end{aligned}.  \end{equation}
\end{itemize}

Consider morphisms $\tilde f=f\circ\pi:\tilde X\to Y$ and $ g=f\circ \tilde u_X:\tilde X\to Y.$ 

Let $\Sigma_1\subset  X,  \ \Sigma_2\subset X$ be  the loci of indeterminancy  of  $\pi^{-1}$ and $\tilde u_X^{-1}$
respectively.

 Since $\codim_X( \Sigma_1)\geq 2$  and $\codim_X(\Sigma_2)\geq 2$, we obtain that
$\codim_Y (f(\Sigma_1))\geq 1$ and $\codim_Y( f(\Sigma_2))\geq 1.$ (Recall that $\dim(X)=\dim(Y)+1$.)

This implies that there is a nonempty open subset $U\subset Y \setminus ( f(\Sigma_1)\cup  f(\Sigma_2))$ such that
%$f$ does {\sl not} meet $\Sigma_1.$ Hence 
$$\tilde F_y:=\tilde f^{-1}(y)=\pi^{-1}(F_y)\cong F_y\cong \BP^1$$ 
and 
$$G_y:=g^{-1}(y)= u_X^{-1}(F_y)\cong F_y\cong\BP^1$$ 
for all $y\in U(k)$.

Since $Y$ is non-uniruled,  $g(\tilde F_y)$ and $\tilde f(G_y)$ are  points for every $y\in U(k)$  (see, e.g. \cite[Chapter IV, Proposition 1.3, (1.3.4), p. 183]{Kollar}). 

  It follows from   Kawamata's Lemma \cite[Lemma 10.7 on pp. 314--315]{Itaka} applied (twice) to morphisms
$$\tilde f, g: \tilde{X}\to Y$$
 that there exist rational maps $h_1,h_2: Y \dasharrow Y$ such that
$$g=h_1\circ \tilde f, \ \tilde f=h_2 \circ g.$$
This implies  that $h_1$ and $h_2$ are mutually inverse birational automorphisms of $Y$. Let us put
$$u_Y:=h_1\in \Bir(Y).$$
% that both $g\circ \tilde  f^{-1}$ and $\tilde f\circ g^{-1}$ are rational maps $Y \dasharrow Y$. We conclude that  
%$$u_Y:=g\circ\tilde  f^{-1} \ \text {and } \ \tilde f\circ g^{-1}:Y\dasharrow Y$$ are mutually inverse birational  isomorphisms and
Then $u_Y$ may be 
 included into the commutative digram

\begin{equation}\label{diagram106}\begin{aligned}
& &   &\tilde X       &  {}  & {}        &   && \notag \\
& &\pi& \downarrow  &        {\searrow}                     &  \tilde {u_X}  & &&\notag \\
& & &X       &\stackrel{u_X}{\dashrightarrow } &X  & &&  \\ 
& &f & \downarrow  &  {}&\downarrow  f& &&\notag \\
& &  &Y           &\stackrel {u_Y}{\dashrightarrow} &Y  &&    &  
\end{aligned}.  \end{equation}

For the corresponding embeddings  $k(Y) \hookrightarrow k(X)$ of fields   of rational functions we have:  $f^*\circ (u_Y)^*=\sigma\circ f^*,$  thus 

$$\sigma (k(Y))=k(Y).$$\end{proof}

\begin{rem} Lemma  \ref{Kaw} follows from the  Theorem   
on  {\sl Maximal Rational Connected Fibrations} \cite[Chapter IV, Theorem 5.5, p.223]{Kollar}. However,  our particular case is much easier, 
so we  were tempted to provide a simple proof
rather than to use the powerful theory.

\end{rem}
%%%%%%%%%%%%%%%%%%%%%%%%%%%

%%%%%%%%%%%%%%%%%%%%%

%%%%%%%%%%%%%%%%%%%%%%%%%%

%%%%%%%%%%%%%%%%%%%%%%%%%%%%

%%%%%%%%%%%%%%%%%%%%%%%%%%%

%%%%%%%%%%%%%%%%%%%%%

%%%%%%%%%%%%%%%%%%%%%%%%%%

%%%%%%%%%%%%%%%%%%%%%%%%%%%%
The next statement follows immediately from  Lemma \ref{Kaw}.

\begin{cor}
\label{exact}
Keeping the notation and assumptions of Lemma \ref{Kaw}, the map 
$$u_X \mapsto u_Y$$
 gives rise to the group homomorphism
$\Bir(X) \to \Bir(Y)$, whose kernel is 
$$\Aut(k(X)/k(X))=\Bir_{k(Y)}(\mathcal{X}_f)$$
 (see \eqref{kernel}) where 
$\mathcal{X}_f$ is the generic fiber of $f$.
 In particular, we get an exact sequence of groups
$$\{1\} \to \Bir_{k(Y)}(\mathcal{X}_f)  \hookrightarrow  \Bir(X) \to \Bir(Y).$$
\end{cor}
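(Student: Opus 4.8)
The plan is to prove \corref{exact} by descending to function fields, using the identifications $\Bir_k(X)=\Aut_k(k(X))$ and $\Bir_k(Y)=\Aut_k(k(Y))$, so that the claimed homomorphism is simply the restriction map $\rho\colon\sigma\mapsto\sigma|_{k(Y)}$ and all the geometric content is already contained in \lemref{Kaw}. First I would recall exactly what \lemref{Kaw} provides: since $Y$ is non-uniruled (and the general fiber of $f$ is $\BP^1_k$), for every $\sigma\in\Aut_k(k(X))$ one has $\sigma(k(Y))=k(Y)$; hence $\sigma$ restricts to a $k$-linear automorphism $\rho(\sigma):=\sigma|_{k(Y)}\in\Aut_k(k(Y))=\Bir(Y)$. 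Moreover, the uniqueness clause of \lemref{Kaw} says precisely that, if $u_X\in\Bir(X)$ induces $\sigma$, then $\rho(\sigma)$ is the unique $u_Y\in\Bir(Y)$ whose action on $k(Y)$ agrees with that of $u_X$. Thus the rule $u_X\mapsto u_Y$ in the statement is well defined and equals $\rho$; this well-definedness is the only point that really needs \lemref{Kaw}.

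Next I would verify that $\rho$ is a group homomorphism, which is immediate once one knows each $\sigma$ preserves $k(Y)$: if $\sigma_1,\sigma_2\in\Aut_k(k(X))$ both leave $k(Y)$ invariant, so does $\sigma_1\sigma_2$, and $(\sigma_1\sigma_2)|_{k(Y)}=\bigl(\sigma_1|_{k(Y)}\bigr)\bigl(\sigma_2|_{k(Y)}\bigr)$, while $\rho(\mathrm{id})=\mathrm{id}$. The only bookkeeping here is to make sure the composition law on $\Bir$ matches the one on $\Aut_k$ of the function field; this is automatic from the identification $\Bir_k(\cdot)=\Aut_k(k(\cdot))$ recalled in the Introduction, so the functorial variance causes no trouble.

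Finally I would compute the kernel. By definition $\sigma\in\ker\rho$ iff $\sigma|_{k(Y)}=\mathrm{id}_{k(Y)}$, i.e.\ iff $\sigma$ fixes every element of $k(Y)$; by \eqref{kernel} this is exactly the condition $\sigma\in\Aut(k(X)/k(Y))=\Bir_{k(Y)}(\mathcal{X}_f)$. Since $\Bir_{k(Y)}(\mathcal{X}_f)$ is literally the subgroup \eqref{kernel} of $\Bir(X)$, the inclusion $\Bir_{k(Y)}(\mathcal{X}_f)\hookrightarrow\Bir(X)$ is injective, and the identification of $\ker\rho$ just made yields exactness of $\{1\}\to\Bir_{k(Y)}(\mathcal{X}_f)\hookrightarrow\Bir(X)\xrightarrow{\rho}\Bir(Y)$. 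I do not expect any genuine obstacle: every substantive step has been carried out in \lemref{Kaw}, and the remaining work is the routine translation between birational self-maps and their induced function-field automorphisms together with the elementary verification that restriction to $k(Y)$ is a homomorphism with the stated kernel.
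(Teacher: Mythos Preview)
Your argument is correct and matches the paper's approach exactly: the paper simply states that the corollary follows immediately from \lemref{Kaw}, and what you have written is a careful unpacking of that immediate implication via the identifications $\Bir_k(X)=\Aut_k(k(X))$ and \eqref{kernel}. Note only that the displayed ``$\Aut(k(X)/k(X))$'' in the statement is a typo for $\Aut(k(X)/k(Y))$, as you correctly use.
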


\begin{rem}
The special case of Corollary \ref{exact} when $Y$ is an abelian surface may be deduced from \cite[Cor. 1.7]{Sarkisov1}.
\end{rem}

\begin{cor}
\label{exactJ}
Keeping the notation and assumptions of Lemma \ref{Kaw}, suppose that %$\Bir(Y)$ is strongly Jordan and
 $\Bir(\mathcal{X}_f)$ is bounded.
Then $\Bir(X)$ is strongly Jordan.
\end{cor}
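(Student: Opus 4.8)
The plan is to feed the exact sequence produced in Corollary~\ref{exact} into the group-theoretic machinery of Section~\ref{2}, specifically into Lemma~\ref{klyachko2}. Keeping the notation and assumptions of Lemma~\ref{Kaw}, Corollary~\ref{exact} already gives an exact sequence of groups
$$\{1\} \to \Bir_{k(Y)}(\mathcal{X}_f) \hookrightarrow \Bir(X) \stackrel{\rho}{\to} \Bir(Y),$$
where $\mathcal{X}_f$ is the generic fiber of $f$. So I would set $G:=\Bir(X)$, $G_1:=\Bir_{k(Y)}(\mathcal{X}_f)=\Bir(\mathcal{X}_f)$ and $G_2:=\rho(G)\subseteq\Bir(Y)$, which yields a short exact sequence $\{1\}\to G_1\to G\to G_2\to\{1\}$, and then try to arrange that $G_1$ lies in a uniformly bounded family and $G_2$ in a uniformly strongly Jordan family.

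For the kernel this is immediate: by hypothesis $G_1=\Bir(\mathcal{X}_f)$ is bounded, so the one-element family $\mathcal{G}_1:=\{G_1\}$ is uniformly bounded. For the quotient, I would use that $Y$ is smooth, projective and non-uniruled: Theorem~\ref{PS}(1) then says $\Bir(Y)$ is Jordan, and Remark~\ref{PSr} says $\Bir(Y)$ has finite subgroups of bounded rank, so $\Bir(Y)$ is strongly Jordan. Since both the Jordan property and boundedness of the ranks of finite abelian subgroups are inherited by subgroups with the same constants, the subgroup $G_2\subseteq\Bir(Y)$ is again strongly Jordan, hence the one-element family $\mathcal{G}_2:=\{G_2\}$ is uniformly strongly Jordan. (This is the one place where a small care is needed: $\rho$ need not be surjective, so one works with the image $G_2$ rather than with $\Bir(Y)$ itself.)

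Finally I would apply Lemma~\ref{klyachko2} to the family $\mathcal{G}:=\{G\}$: since $G$ sits in an exact sequence $\{1\}\to G_1\to G\to G_2\to\{1\}$ with $G_1\in\mathcal{G}_1$ uniformly bounded and $G_2\in\mathcal{G}_2$ uniformly strongly Jordan, the family $\mathcal{G}$ is uniformly strongly Jordan, i.e.\ $\Bir(X)=G$ is strongly Jordan, which is exactly the assertion. I do not expect a real obstacle in this step: the substantive inputs — the Prokhorov–Shramov theorem together with Remark~\ref{PSr} (ensuring $\Bir(Y)$ is strongly Jordan for non-uniruled $Y$), the exact sequence of Corollary~\ref{exact}, and Klyachko's extension lemma in the form of Lemma~\ref{klyachko2} — are all already available, so the proof amounts to assembling them and noting that strong Jordaness passes to subgroups.
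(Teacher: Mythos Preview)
Your proof is correct and follows essentially the same route as the paper: use the exact sequence from Corollary~\ref{exact}, invoke Prokhorov--Shramov (Theorem~\ref{PS} and Remark~\ref{PSr}) to get that $\Bir(Y)$ is strongly Jordan for non-uniruled $Y$, and then apply Lemma~\ref{klyachko2}. The only difference is that the paper pauses here to spell out the argument behind Remark~\ref{PSr} in detail, whereas you simply cite it; your extra remark that one must pass to the image $G_2\subseteq\Bir(Y)$ and that strong Jordaness is inherited by subgroups is a welcome clarification.
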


\begin{proof}
By results of Prokhorov and Shramov (Theorem \ref{PS} and Remark \ref{PSr}),  $\Bir(Y)$ is strongly Jordan, because $Y$ is {\sl non-uniruled}.
More precisely,  they proved in  \cite[Corllary 6.8]{ProSha1} that  group $\Bir(Y)$ is Jordan provided $Y$ is non-uniruled. In  \cite[Remark  6.9]{ProSha1}  they claim that actually if $Y$ is non-uniruled then
$\Bir(Y)$ has  finite subgroups of bounded rank (and therefore is strongly Jordan): the proof has to be  based on the same arguments as the proof of Cor. 6.8 of \cite{ProSha1}  but is not presented.  Let us provide the needed mild modifications of the proof of Cor. 6.8 of \cite{ProSha1}  in order to obtain that non-uniruledness of $Y$ implies having finite subgroups of bounded rank.
Indeed, for  $Y$  non-uniruled  
and   for a finite commutative subgroup $G\subset \Bir(Y)$ one has (using  \cite[Proposition 6.2]{ProSha1} and its notation except replacing $X$ by $Y$ and $X_{\mathrm{nu}}$ by $Y_{\mathrm{nu}}$)  the following.
\begin{itemize}\item

$Y_{\mathrm{nu}}\sim Y$ (thanks to Lemma 4.4 and Remark 4.5 of \cite[Lemma 4.4 on p. 2061]{ProSha1}) and  the group $G_{\mathrm{nu}}$ is isomorphic to $G$. In particular,  $G_{\mathrm{nu}}$ is also finite and commutative. 
\item
 There are short exact sequences (\cite[p. 2063, 6.5]{ProSha1})
$$1 \to G_{alg} \to G \to G_N \to 1,$$
$$1\to G_L \to G_{alg} \to G_{ab} \to 1,$$
where 
\begin{enumerate}
\item[(1)]
The finite groups
$G_{\mathrm{alg}}$ and $G_N$ are commutative.
\item[(2)]  $\rank(G_{\mathrm{ab}})\le m_1$ where $m_1$ depends only on $q(Y)$.
\item[(3)] $\mid G_L\mid \le n=n(Y),$ i.e. $G_L $ is finite  and its order is bounded from above by a number $n$ that depends  on $Y$ but not on $G$. (This follows from Lemma 5.2 of \cite[p. 2062]{ProSha1}.)
\item[(4)] It follows from (2) and (3) combined with the  exact sequence (6.5) of \cite[p. 2063]{ProSha1} that $\rank(G_{\mathrm{alg}})\le n + m_1$.
\item[(5)]  $\mid G_N\mid\le b:=b(Y)$ is bounded from above by a number $b(Y)$ that depends only on $Y$. (It follows from Cor. 2.14 of \cite[p. 2058]{ProSha1}.)
\end{enumerate}\end{itemize}

It follows from the exact sequences    that  
$$\rank(G)=\rank(G_{\mathrm{nu}})\le \rank(G_{\mathrm{alg}})+b(Y)\le (n + m_1)+b(Y) =:m(Y)$$
where the  bound $m(Y)$  depends on $Y$ but does not depend on $G$. This proves that $\Bir(Y)$ has finite subgroups of  bounded rank.

So, we know that $\Bir(Y)$ is strongly Jordan.
Now the desired result follows readily from Corollary \ref{exact} combined with   Lemma \ref{klyachko2}.
\end{proof}

In the next section we prove that  $\Bir(\mathcal{X}_f)$ is bounded  if $\mathcal{X}_f$ is {\sl not} the projective line over $k(Y)$.

\section{Linear Algebra}\label{4}
\label{linalg}
Throughout this section $K$ is a field of characteristic $0$ that contains {\sl all} roots of unity. Let $V$ be a vector space over $K$ of finite positive dimension $d$.
We write $1_V$ for the identity automorphism of $V$.  As usual, $\End_K(V)$ stands for the algebra of $K$-linear operators in $V$ and
$$\Aut_K(V)=\End_K(V)^{*}$$
for the group of linear invertible operators in $V$.  We write
$$\det={\det}_V:\End_K(V) \to K$$
for the determinant map. It is well known that $\Aut_K(V)$ consists of all elements of $\End_K(V)$ with {\sl nonzero} determinant and
$$\det: \Aut_K(V) \to K^{*}$$
is a group homomorphism.

 Since $K$  has characteristic zero and contains all roots of unity,  every
 periodic ($K$-linear) automorphism
  $u \in \Aut_K(V)$  of $V$  admits a  basis of $V$ that consists of eigenvectors of $u$, because $u$ is semisimple and all its eigenvalues  lie in  $K$.

Let 
$$\phi: V \times V \to K$$
be a nondegenerate  symmetric $K$-bilinear form that is {\sl anisotropic}, i.e.
$\phi(v,v)\ne 0$ for all {\sl nonzero} $v \in V.$ The form $\phi$ defines the {\sl involution of the first kind}
$$\sigma=\sigma_{\phi}: \End_K(V) \to \End_K(V)$$
characterized by the property
$$\phi(ux,y)=\phi(x, \sigma(u)y) \ \forall x,y \in V$$
(see \cite[Ch. 1]{Involutions}).  It is known \cite[Ch. 1, Sect. 2, Cor. 2.2 on p.14 and Prop. 2.19 on p. 24]{Involutions} that
$$\det(u)=\det(\sigma(u)) \ \forall u \in \End_K(V).$$

 We write
$\GO(V,\phi)\subset \Aut_K(V)$ for the (sub)group of {\sl similitudes} of $\phi$. In other words,  a $K$-linear automorphism $u$ of $V$ lies in $\GO(V,\phi)$ if and only if there exists 
$$\mu=\mu(g) \in K^{*}$$ such that
$$\phi(ux,uy)=\mu \cdot \phi(x,y) \ \forall x,y \in V.$$
If this is the case then
$$\sigma(u) u=\mu\cdot 1_V.$$
Clearly,
$$K^{*}\cdot 1_V \subset \GO(V,\phi).$$
 We have
$$\SO(V,\phi)\subset \OO(V,\phi) \subset \GO(V,\phi)$$
where
$$\OO(V,\phi) =\{u \in \Aut_K(V)\mid \phi(ux,uy)= \phi(x,y) \ \forall x,y \in V\}$$ 
while $\SO(V,\phi)$ consists of all elements of $\OO(V,\phi)$ with determinant $1$. (Recall that elements of $\OO(V,\phi)$  have determinant $1$ or $-1$. In particular, 
$\SO(V,\phi)$ is a normal subgroup of index $2$ in $\OO(V,\phi)$.) 
Clearly,
$$\OO(V,\phi) =\{u \in \Aut_K(V)\mid   \sigma(u) u=1_V\}.$$
%%%%%%%%%new stuff June 27, 2016
It is also clear that
$$\GO(V,\phi) \to K^{*}, \ u \mapsto \mu(u)$$
is a group homomorphism, whose kernel coincides with
$\OO(V,\phi)$; in particular, $\OO(V,\phi)$ is a {\sl normal} subgroup of $\GO(V,\phi)$.
%%%%%%%%%%%%%%%%%%%%%%%%%%%%%%%%%%%%%%%
It is well known (and  may be easily checked) that
$$\OO(V,\phi)\bigcap [K^{*} \cdot 1_V]=\{\pm 1_V\};$$
in addition, if $d=\dim(V)$ is {\sl odd} then
$$\SO(V,\phi)\bigcap [K^{*} \cdot 1_V]=\{1_V\}.$$
We denote by $\PO(V,\phi)$ the quotient group $\GO(V,\phi)/(K^{*} \cdot 1_V)$. 

%%%%%%%%%%New stuff June 27, 2016%%%%%%%%%%%%
%%%%%%%%%%%%%%new stuff added June 27, 2016%%%%%%%
\begin{rem}
\label{import}
The importance of the group $\PO(V,\phi)$ is explained by the following result \cite[Sect. 69,  Corollary 69.6 on p. 310]{KarpenkoM}.
Let  
$$q(v):=\phi(v,v)$$
be the corresponding quadratic form on $V$ and let
 $$X_{q}\subset \mathbb{P}(V)$$
 be  the projective quadric
   defined by the equation $q(v)=0$, which is a smooth projective irreducible  $(d-2)$-dimensional variety over $K$. Then  the groups $\Aut(X_{q})$ and $\PO(V,\phi)$ are isomorphic. 
\end{rem}

\begin{rem}
\label{karp}
\label{surO}
Restricting the surjection
$$\GO(V,\phi) \twoheadrightarrow \GO(V,\phi)/(K^{*} \cdot 1_V)=\PO(V,\phi)$$
 to the subgroup $\OO(V,\phi)$, we get a group homomorphism
$$\OO(V,\phi) \to \PO(V,\phi),$$
whose kernel is   a
 finite   subgroup $\{\pm 1_V\}$. This implies that if $u$ is an element of $\OO(V,\phi)$, whose image in $\PO(V,\phi)$ has finite order then $u$ itself has finite order.
 \end{rem}
%%%%%%%%%%%%%%%%%%%%%%%%%%%%%%%%%%%%

\begin{lem}
\label{order2}
Let $u$ be an element of finite order in $\OO(V,\phi)$.  Then $u^2=1_V$.
\end{lem}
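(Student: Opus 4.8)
The plan is to reduce everything to the eigenspace decomposition of $u$ and to use the anisotropy of $\phi$ at exactly one point. Since $K$ has characteristic zero and contains all roots of unity, and $u$ has finite order, the observation already made in this section applies: $u$ is semisimple and all its eigenvalues lie in $K$ and are roots of unity. So first I would write $V=\bigoplus_{\lambda} V_\lambda$, the sum running over the finitely many eigenvalues $\lambda\in K^{*}$ of $u$, where $V_\lambda=\{v\in V\mid uv=\lambda v\}$ and each $\lambda$ is a root of unity.

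\textbf{Key step.} Next I would extract the orthogonality relations forced by $u\in\OO(V,\phi)$. For $x\in V_\lambda$ and $y\in V_\mu$ one computes
$$\phi(x,y)=\phi(ux,uy)=\lambda\mu\,\phi(x,y),$$
so $(\lambda\mu-1)\phi(x,y)=0$; in particular, taking $\mu=\lambda$, if $\lambda^{2}\neq 1$ then $\phi$ vanishes identically on $V_\lambda\times V_\lambda$. But $\phi$ is anisotropic, so $\phi(v,v)\neq 0$ for every nonzero $v$; hence $\phi$ cannot vanish on any nonzero subspace, and therefore $V_\lambda=\{0\}$ whenever $\lambda^{2}\neq 1$. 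Consequently the only eigenvalues of $u$ are $+1$ and $-1$, so $V=V_{1}\oplus V_{-1}$ and, $u$ being semisimple, $u^{2}=1_V$.

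\textbf{Main obstacle.} There is essentially no obstacle of substance: the whole content is in applying the anisotropy hypothesis precisely to kill the eigenspaces $V_\lambda$ with $\lambda^{2}\neq 1$ (without anisotropy the statement is false, e.g.\ $u$ could have eigenvalues $\pm\mathrm{i}$). The only care required is bookkeeping — ensuring $u$ really is diagonalizable over $K$, which is exactly where "characteristic zero and contains all roots of unity" is used. No facts about the involution $\sigma_\phi$, about $\GO(V,\phi)$, or about the quadric $X_q$ are needed.
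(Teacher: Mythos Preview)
Your proof is correct and is essentially the same as the paper's: both compute $\phi(ux,ux)=\lambda^{2}\phi(x,x)$ for an eigenvector $x$ and invoke anisotropy to force $\lambda^{2}=1$, then use semisimplicity to conclude $u^{2}=1_V$. The paper simply works with one eigenvector at a time rather than writing out the full eigenspace decomposition and orthogonality relations, but the argument is identical in substance.
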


\begin{proof}
Let $\lambda$ be an eigenvalue of $u$. Then $\lambda$ is a root of unity and therefore lies in $K$. This implies that there is a (nonzero) eigenvector $x \in V$ with $ux=\lambda x$. Since $u \in \OO(V,\phi)$,
$$\phi(ux,ux)=\phi(x,x).$$
Since $ux=\lambda x$,
$$\phi(ux,ux)=\phi(\lambda x,\lambda x)=\lambda^2 \phi(x,x)$$
and therefore
$\lambda^2 \phi(x,x)=\phi(x,x)$.
Since $\phi$ is anisotropic, $\phi(x,x)\ne 0$ and therefore $\lambda^2=1$.  In other words, every eigenvalue of $u^2$ is $1$ and therefore (semisimple) $u^2=1_V$.
\end{proof}

\begin{cor}
\label{exp2}
Let $G$ be a finite subgroup of $\OO(V,\phi)$. If $G$ does not coincide with $\{1_V\}$ then it is a commutative group of exponent $2$, whose order divides $2^d$. If, in addition, $G$ lies in $\SO(V,\phi)$ then its order divides $2^{d-1}$.
\end{cor}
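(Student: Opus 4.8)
The plan is to combine Lemma \ref{order2} with elementary group theory and simultaneous diagonalization. First I would observe that since $G$ is finite, each of its elements has finite order, so Lemma \ref{order2} gives $u^2 = 1_V$ for every $u \in G$. A group all of whose nonidentity elements have order $2$ is automatically abelian --- from $(ab)^2 = a^2 = b^2 = 1_V$ one gets $ab = b^{-1}a^{-1} = ba$ --- so $G$ is commutative, and if $G \ne \{1_V\}$ it contains an element of order exactly $2$, hence has exponent $2$. Thus $G$ is an elementary abelian $2$-group, and it remains only to bound its order.

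For the bound I would use that each $u \in G$ is semisimple (its minimal polynomial divides $X^2 - 1 = (X-1)(X+1)$, which has distinct roots since $\mathrm{char}(K) = 0$) with all eigenvalues in $\{1,-1\} \subset K$. Since $G$ is a commuting family of diagonalizable operators whose eigenvalues all lie in $K$, there is a basis $e_1, \dots, e_d$ of $V$ consisting of common eigenvectors for all of $G$. Writing $u e_i = \varepsilon_i(u)\, e_i$ with $\varepsilon_i(u) \in \{\pm 1\}$, the assignment $u \mapsto (\varepsilon_1(u), \dots, \varepsilon_d(u))$ is a group homomorphism $G \to \{\pm 1\}^d$; it is injective because an element acting as the identity on every $e_i$ equals $1_V$. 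Hence $|G|$ divides $|\{\pm1\}^d| = 2^d$.

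Finally, if $G \subseteq \SO(V,\phi)$, then for each $u \in G$ we have $1 = \det(u) = \prod_{i=1}^d \varepsilon_i(u)$, so the image of $G$ under the homomorphism above lies in the kernel of the product map $\{\pm1\}^d \to \{\pm1\}$, which is a subgroup of index $2$ and order $2^{d-1}$. Therefore $|G|$ divides $2^{d-1}$. I do not expect any genuine obstacle here: the proof is essentially a direct consequence of Lemma \ref{order2}, and the only points that need a word of care are the justification of the simultaneous diagonalization and of the injectivity of the character map $u \mapsto (\varepsilon_i(u))_i$, both of which are standard once one knows that all eigenvalues are $\pm 1 \in K$.
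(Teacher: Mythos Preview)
Your proof is correct and follows essentially the same approach as the paper: invoke Lemma~\ref{order2} to get $u^2=1_V$ for all $u\in G$, deduce commutativity, simultaneously diagonalize over $K$ with all diagonal entries in $\{\pm 1\}$, and read off the order bounds (using the determinant condition for the $\SO$ case). Your version is simply more explicit about the character map $u\mapsto(\varepsilon_i(u))_i$ and its injectivity, but the underlying argument is identical.
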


\begin{proof}
By Lemma \ref{order2}, every $u \in G$ satisfies $u^2=1_V$. This implies that $G$ is commutative.  In addition, $G$ is a $2$-group, i.e., its order is a power of $2$. The commutativeness of $G$ implies that there is a basis of $V$ such that the matrices of all elements of $G$ become diagonal with respect to this basis. Since all the diagonal entries are either $1$ or $-1$,  the order of $G$ does not exceed $2^d$ and therefore divides $2^d$. If, in addition, all elements of $G$ have determinant $1$ then the order of $G$ does not exceed $2^{d-1}$ and, therefore, divides $2^{d-1}$. 
\end{proof}

%%%%%%%%%%%%%new stuff added June 27, 2016%%%%%%%%%%%

\begin{cor}
\label{exp4}
Let $\mathbf{u}$ be an element of finite order in $\PO(V,\phi)$. Then $\mathbf{u}^4=1$.
\end{cor}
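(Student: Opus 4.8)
The plan is to lift $\mathbf{u}$ to a similitude and reduce the problem to Lemma \ref{order2} applied to a suitable power. First I would pick $u \in \GO(V,\phi)$ whose image in $\PO(V,\phi)=\GO(V,\phi)/(K^{*}\cdot 1_V)$ equals $\mathbf{u}$, and write $\mu=\mu(u)\in K^{*}$ for its multiplier, so that $\sigma(u)u=\mu\cdot 1_V$ (with $\sigma=\sigma_\phi$ the involution of the first kind attached to $\phi$).

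Next I would introduce $w:=\mu^{-1}u^{2}=(\mu^{-1}1_V)\cdot u^{2}\in\Aut_K(V)$ and check that $w\in\OO(V,\phi)$. The cleanest way is to use that $g\mapsto\mu(g)$ is a group homomorphism $\GO(V,\phi)\to K^{*}$ with kernel $\OO(V,\phi)$, together with the elementary identity $\mu(c\cdot 1_V)=c^{2}$ for $c\in K^{*}$: then $\mu(w)=\mu(\mu^{-1}1_V)\cdot\mu(u^{2})=\mu^{-2}\cdot\mu(u)^{2}=1$, so $w\in\OO(V,\phi)$. (Alternatively one computes directly $\sigma(w)w=\mu^{-2}\sigma(u)^{2}u^{2}=\mu^{-2}\sigma(u)\bigl(\sigma(u)u\bigr)u=\mu^{-2}\sigma(u)(\mu 1_V)u=\mu^{-2}\mu\bigl(\sigma(u)u\bigr)=\mu^{-2}\mu^{2}1_V=1_V$, using that $\sigma$ reverses products and that the scalar $\mu 1_V$ is central.)

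Since $w$ and $u^{2}$ differ by the scalar factor $\mu^{-1}$, the image of $w$ in $\PO(V,\phi)$ is exactly $\mathbf{u}^{2}$, which has finite order because $\mathbf{u}$ does. By Remark \ref{surO} — the kernel of the homomorphism $\OO(V,\phi)\to\PO(V,\phi)$ being the finite group $\{\pm 1_V\}$ — the element $w\in\OO(V,\phi)$ itself has finite order. Lemma \ref{order2} then yields $w^{2}=1_V$, i.e. $\mu^{-2}u^{4}=1_V$, so $u^{4}=\mu^{2}\cdot 1_V\in K^{*}\cdot 1_V$. Passing to the quotient $\PO(V,\phi)$ gives $\mathbf{u}^{4}=1$, as desired. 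I do not anticipate a genuine obstacle here; the only step needing a little care is the verification $w\in\OO(V,\phi)$, where one must keep in mind that $\sigma$ is an anti-automorphism and that $\sigma(u)u=\mu 1_V$ is a central scalar — both facts already recorded in Section \ref{linalg}.
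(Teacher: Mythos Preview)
Your proof is correct and follows exactly the paper's approach: lift $\mathbf{u}$ to $u\in\GO(V,\phi)$, note that $\mu^{-1}u^{2}\in\OO(V,\phi)$ has finite-order image $\mathbf{u}^{2}$ and hence is itself of finite order by Remark~\ref{surO}, then apply Lemma~\ref{order2}. If anything, you supply more detail than the paper in verifying $w\in\OO(V,\phi)$.
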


\begin{proof}
 Choose an element
 $u$   of $\GO(V,\phi)$ such that its image in  $\PO(V,\phi)$ coincides with $\mathbf{u}$. Then there is $\mu \in K^{*}$ such that
$$\phi(ux,uy)=\mu \phi(x,y) \ \forall \ x,y \in V.$$
This implies that $u_2:=\mu^{-1} u^2$ lies in $\OO(V,\phi)$. Clearly, the image $\bar{u}_2 \in \PO(V,\phi)$ of $u_2$ coincides with $\mathbf{u}^2$ and therefore has finite order. By Corollary \ref{surO}.
 $u_2$ has finite order. It follows from Lemma \ref{order2} that $u_2^2=1_V$. This implies that $\mathbf{u}^2$ has order $1$ or $2$ and therefore the order of  $\mathbf{u}$  divides $4$.
\end{proof}

\begin{cor}
\label{nilp}
If $\Bc$ is a finite subgroup of $\PO(V,\phi)$ then it sits in a short exact sequence
$$\{1\} \to \Ac_1 \to \Bc \to \Ac_2 \to \{1\}$$
where both $\Ac_1$ and $\Ac_2$ are finite elementary commutative $2$-groups and  $|\Ac_1|$ divides $2^{d-1}$. In particular, each finite subgroup $\Bc$ of $\PO(V,\phi)$ is a finite $2$-group such that
$$[[\Bc,\Bc], [\Bc,\Bc]]=\{1\}.$$
\end{cor}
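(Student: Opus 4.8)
The plan is to produce the short exact sequence by intersecting a finite subgroup $\Bc \subset \PO(V,\phi)$ with the image of $\OO(V,\phi)$. Recall that the similitude homomorphism $\mu : \GO(V,\phi) \to K^{*}$ has kernel $\OO(V,\phi)$, and it sends scalars $c \cdot 1_V$ to $c^{2}$; hence it descends to a homomorphism $\bar\mu : \PO(V,\phi) = \GO(V,\phi)/(K^{*}\cdot 1_V) \to K^{*}/(K^{*})^{2}$. Its kernel is exactly the image $\OO(V,\phi)/\{\pm 1_V\}$ of $\OO(V,\phi)$ inside $\PO(V,\phi)$; call this subgroup $\PO^{+}$. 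So for a given finite $\Bc$ I would set $\Ac_1 := \Bc \cap \PO^{+}$ and $\Ac_2 := \Bc/\Ac_1$, which embeds via $\bar\mu$ into $K^{*}/(K^{*})^{2}$, an elementary abelian $2$-group; therefore $\Ac_2$ is a finite elementary abelian $2$-group.

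Next I would analyze $\Ac_1$. Being a finite subgroup of $\PO^{+} = \OO(V,\phi)/\{\pm 1_V\}$, it is the image of some finite subgroup $\tilde{\Ac}_1 \subset \OO(V,\phi)$ (take the full preimage of $\Ac_1$ under $\OO(V,\phi)\to\PO^{+}$; by Remark~\ref{surO} every such element has finite order, and since $\Ac_1$ is finite and the kernel $\{\pm 1_V\}$ is finite, $\tilde\Ac_1$ is finite). By Corollary~\ref{exp2}, $\tilde\Ac_1$ is a commutative group of exponent dividing $2$ whose order divides $2^{d}$; consequently its quotient $\Ac_1 = \tilde\Ac_1/(\tilde\Ac_1 \cap \{\pm 1_V\})$ is also a finite elementary abelian $2$-group. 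For the divisibility claim I would use the sharper part of Corollary~\ref{exp2}: an exponent-$2$ subgroup of $\OO(V,\phi)$ that meets $K^{*}\cdot 1_V$ nontrivially (i.e. contains $-1_V$) — but here I need the bound $|\Ac_1| \mid 2^{d-1}$, which I would get by noting that the composite $\tilde\Ac_1 \hookrightarrow \OO(V,\phi) \xrightarrow{\det} \{\pm 1\}$ has kernel in $\SO(V,\phi)$ of order dividing $2^{d-1}$, while $-1_V$ has determinant $(-1)^{d}$; in either parity one checks that the image $\Ac_1$ of $\tilde\Ac_1$ in $\PO^{+}$ has order dividing $2^{d-1}$, since passing to the quotient by $\{\pm 1_V\}$ kills one independent generator exactly when $-1_V \notin \tilde\Ac_1$, and when $-1_V \in \tilde\Ac_1$ one instead uses that $\tilde\Ac_1 \subset \SO$ or the relevant index-$2$ refinement gives the bound directly. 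This bookkeeping of the two cases (whether $-1_V$ lies in the chosen preimage, and the parity of $d$) is where I expect the only real fiddliness to lie.

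Finally, the structural conclusion. Since $\Ac_2$ is a $2$-group and $\Ac_1$ is a $2$-group, the extension $\{1\}\to\Ac_1\to\Bc\to\Ac_2\to\{1\}$ shows $\Bc$ is a finite $2$-group. For the metabelian-of-the-commutator-subgroup statement $[[\Bc,\Bc],[\Bc,\Bc]] = \{1\}$: the commutator subgroup $[\Bc,\Bc]$ maps to $[\Ac_2,\Ac_2] = \{1\}$ since $\Ac_2$ is abelian, so $[\Bc,\Bc] \subseteq \Ac_1$; and $\Ac_1$ is abelian, so $[[\Bc,\Bc],[\Bc,\Bc]]\subseteq[\Ac_1,\Ac_1]=\{1\}$. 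This completes the proof. The only genuine obstacle is the precise divisibility bound on $|\Ac_1|$; everything else is formal diagram-chasing on the similitude sequence together with the already-established exponent-$2$ structure from Corollary~\ref{exp2} and Corollary~\ref{exp4}.
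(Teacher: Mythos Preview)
Your approach is essentially the paper's: both define $\Ac_1$ as the intersection of $\Bc$ with the image of $\OO(V,\phi)$ in $\PO(V,\phi)$, lift to a finite subgroup $\tilde\Ac_1\subset\OO(V,\phi)$, and invoke Corollary~\ref{exp2}. Your use of the descended similitude character $\bar\mu:\PO(V,\phi)\to K^{*}/(K^{*})^{2}$ to see that $\Ac_2$ is elementary abelian is a nice alternative to the paper's observation (from the proof of Corollary~\ref{exp4}) that every square in $\Bc$ lies in $\Ac_1$; these are two ways of saying the same thing.

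The ``fiddliness'' you anticipate for the bound $|\Ac_1|\mid 2^{d-1}$ is not real, and your case analysis is both unnecessary and slightly garbled. Since you take $\tilde\Ac_1$ to be the \emph{full} preimage of $\Ac_1$ under $\OO(V,\phi)\twoheadrightarrow\PO^{+}$, and the kernel of that surjection is exactly $\{\pm 1_V\}$, the element $-1_V$ lies in $\tilde\Ac_1$ automatically. Hence $|\tilde\Ac_1|=2\,|\Ac_1|$ always, and Corollary~\ref{exp2} gives $|\tilde\Ac_1|\mid 2^{d}$, so $|\Ac_1|\mid 2^{d-1}$ with no cases and no appeal to $\det$ or the parity of $d$. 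This is exactly how the paper handles it.
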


\begin{proof}
Let $\Ac_1$ be the subgroup of all elements of $\Bc$ that are the images of elements of $\OO(V,\phi)$. Since $\OO(V,\phi)$ is normal in $\GO(V,\phi)$, the subgroup $\Ac_1$ is normal in $\Bc$. It follows from the proof of Corollary \ref{exp4} that for each  $\mathbf{u}\in \Bc$ its square $\mathbf{u}^2$ lies in $\Ac_1$. This implies that all the elements of the quotient
$\Ac_2:=\Bc/\Ac_1$ have order $1$ or $2$. It follows that $\Ac_2$ is an elementary abelian $2$-group.  We get a short exact sequence
$$\{1\} \to \Ac_1 \to \Bc \to \Ac_2 \to \{1\}.$$
Let $\tilde{\Ac}_1$ be the preimage of $\Ac_1$ in $\OO(V,\phi)$. 
Clearly,  $\tilde{\Ac}_1$ is a subgroup of $\OO(V,\phi)$ and 
$|\tilde{\Ac}_1|=2\cdot |\Ac_1|$. On the other hand, it follows from Corollary \ref{exp2} that $\tilde{\Ac}_1$ is an elementary abelian $2$-group, whose order divides $2^d$. Since $\tilde{\Ac}_1$ maps {\sl onto} $\Ac_1$, the latter is also an elementary abelian $2$-group and its order divides $\frac{1}{2}2^d=2^{d-1}$. Since
$$|\Bc|=|\Ac_1| \cdot |\Ac_2|,$$
the order of $\Bc$ is a power of $2$, i.e., $\Bc$ is a finite $2$-group. On the other hand, since $\Ac_2=\Bc/\Ac_1$ is abelian, $[\Bc,\Bc]\subset \Ac_1$. Since $\Ac_1$ is abelian,
$$[[\Bc,\Bc], [\Bc,\Bc]]\subset [\Ac_1,\Ac_1]=\{1\},$$
i.e., $[[\Bc,\Bc], [\Bc,\Bc]]=\{1\}$.
\end{proof}

In the case of {\sl odd} $d$ we can do better. Let us start with the following observation.

%%%%%%%%%%%%%%%%%%%%%%%%%%%%%%%%%%%%%%%
\begin{lem}
\label{odd}
Suppose that $d=2\ell+1$ is an odd integer that is greater or equal than $3$. Then every $u \in \GO(V,\phi)$ can be presented as
$$u =\mu_0 \cdot u_0$$
with $u_0 \in \SO(V,\phi)$ and $\mu_0 \in K^{*}$
\end{lem}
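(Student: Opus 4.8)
The plan is to exploit the fact that for odd $d=2\ell+1$, the scalar $-1_V$ has determinant $(-1)^{2\ell+1}=-1$, so that multiplication by a suitable scalar lets us normalize the determinant of any similitude to $1$ without leaving the group of similitudes. First I would take $u \in \GO(V,\phi)$ and let $\mu=\mu(u) \in K^{*}$ be its multiplier, so that $\sigma(u)u=\mu\cdot 1_V$. Taking determinants and using $\det(u)=\det(\sigma(u))$ (quoted earlier from \cite{Involutions}), I get $\det(u)^2=\mu^{d}=\mu^{2\ell+1}$, hence $\det(u)^2 = \mu^{2\ell}\cdot\mu$, so $\bigl(\det(u)\mu^{-\ell}\bigr)^2=\mu$.

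Next I would look for a scalar $c \in K^{*}$ such that $u_0 := c^{-1}u$ has multiplier $1$ (i.e. lies in $\OO(V,\phi)$) and determinant $1$ (i.e. lies in $\SO(V,\phi)$). Scaling by $c$ changes the multiplier from $\mu$ to $\mu c^{-2}$ and the determinant from $\det(u)$ to $\det(u)c^{-d}=\det(u)c^{-(2\ell+1)}$. So I need $c^2=\mu$ and $c^{2\ell+1}=\det(u)$; the first forces $c^{2\ell+1}=c\cdot(c^2)^{\ell}=c\mu^{\ell}$, so the two conditions are compatible precisely when $c=\det(u)\mu^{-\ell}$, and by the displayed identity above this $c$ indeed satisfies $c^2=\mu$ (here I use that $K$ contains all roots of unity, or at least is quadratically closed enough — in fact $c$ is already pinned down as $\det(u)\mu^{-\ell}$, and the computation $c^2=\mu$ is exactly what was derived). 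Then $u_0:=c^{-1}u$ lies in $\OO(V,\phi)$ since its multiplier is $\mu c^{-2}=1$, and $\det(u_0)=\det(u)c^{-(2\ell+1)}=\det(u)\cdot\det(u)^{-1}=1$, so $u_0\in\SO(V,\phi)$. Setting $\mu_0:=c=\det(u)\mu^{-\ell}$ gives $u=\mu_0\cdot u_0$ as required.

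The main point to be careful about — really the only obstacle — is the bookkeeping with the exponent: one must check that the single candidate scalar $c=\det(u)\mu^{-\ell}$ simultaneously satisfies both $c^2=\mu$ and $c^{2\ell+1}=\det(u)$, and that oddness of $d$ is exactly what makes the system solvable (for even $d$ the obstruction $\mu\in (K^*)^2$ genuinely appears, which is why the lemma is stated only for odd $d\ge 3$). Everything else is a routine determinant computation using $\det(\sigma(u))=\det(u)$ and the relation $\sigma(u)u=\mu\cdot 1_V$.
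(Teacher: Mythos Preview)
Your proof is correct and follows essentially the same route as the paper: both set $\mu_0=\det(u)\,\mu^{-\ell}$ (your $c$, the paper's $\mu_0=\gamma/\mu^{\ell}$), verify $\mu_0^2=\mu$ from $\det(u)^2=\mu^{2\ell+1}$, and then check that $u_0=\mu_0^{-1}u$ has multiplier $1$ and determinant $1$. Your aside about needing $K$ to contain roots of unity is unnecessary, as you yourself note---$\mu_0$ is produced explicitly from $\det(u)$ and $\mu$, so no square-root extraction is required.
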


\begin{ex}
If $u$ is an element of $\OO(V,\phi)$ with determinant $-1$ then 
$$u=(-1)\cdot (-u), \ (-u) \in \SO(V,\phi).$$
\end{ex}

\begin{proof}[Proof of Lemma \ref{odd}]
Recall that there is $\mu \in K^{*}$ such that
$$\phi(ux,uy)=\mu \cdot \phi(x,y) \ \forall x,y \in V$$ 
and therefore
$$\sigma(u) u=\mu \cdot 1_V.$$
Now let $\gamma \in K^{*}$ be the determinant of $u$. Since
$\det(\sigma(u))=\det(u)$, we obtain that
$$\mu^{2\ell+1}=\mu^{d}=\det(\mu\cdot 1_V)=\det(\sigma(u) u)=\det(\sigma(u))\det(u)=\gamma^2.$$
This implies that
$$\gamma^2=\mu^{d}=\mu^{2\ell+1}.$$
Let us put 
$$\mu_0=\frac{\gamma}{\mu^{\ell}}, \ u_0=\mu_0^{-1} \cdot u.$$
Then
$$\mu_0^2=\mu, \ \gamma=\mu_0^{2\ell+1}=\mu_0^d,$$
$$u=\mu_0\cdot u_0, \ \det(u_0)=\mu_0^{-d}\cdot\det(u)=\gamma^{-1}\gamma=1.$$
We also have
$$\phi(u_0 x, u_0 y)=\phi(\mu_0^{-1}u x, \mu_0^{-1}u y)=
\mu_0^{-2}\cdot \phi(u x, u y)=\mu^{-1} \phi(u x, u y)=
\mu^{-1}\cdot \mu \cdot \phi(x,y)=\phi(x,y).$$
This implies that $u_0 \in \OO(V,\phi)$. Since $\det(u_0)=1$,
$$u_0 \in \SO(V,\phi).$$
\end{proof}

\begin{cor}
\label{oddPO}
Suppose that $d=2\ell+1$ is an odd integer that is greater or equal than $3$. Then the group homomorphism
$$\mathrm{prod}: K^{*} 1_V \times  \SO(V,\phi) \to \GO(V,\phi),
\  (\mu_0\cdot 1_V, u_0) \mapsto \mu_0 \cdot u_0$$
is a group isomorphism. In particular, the group
$\PO(V,\phi)=\GO(V,\phi)/(K^{*} 1_V)$ is canonically isomorphic to $\SO(V,\phi) $.
\end{cor}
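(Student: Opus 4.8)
The plan is to check the three things that make \(\mathrm{prod}\) an isomorphism — that it is a well-defined group homomorphism, that it is surjective, and that it is injective — and then to read off the statement about \(\PO(V,\phi)\) by passing to quotients. All of the real content is already packaged in Lemma \ref{odd} and in the two intersection facts recorded just before Remark \ref{import}, so this is essentially bookkeeping.

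First I would observe that both coordinates land in \(\GO(V,\phi)\): indeed \(K^{*}\cdot 1_V\subset\GO(V,\phi)\) and \(\SO(V,\phi)\subset\OO(V,\phi)\subset\GO(V,\phi)\), so \(\mathrm{prod}\) at least maps into \(\GO(V,\phi)\). Since scalar operators are central in \(\Aut_K(V)\), one has
\[
\mathrm{prod}\bigl((\mu_0 1_V,u_0)(\nu_0 1_V,v_0)\bigr)=\mathrm{prod}(\mu_0\nu_0 1_V,\,u_0 v_0)=\mu_0\nu_0\,u_0 v_0=(\mu_0 u_0)(\nu_0 v_0),
\]
so \(\mathrm{prod}\) is a group homomorphism. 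Surjectivity is then immediate from Lemma \ref{odd}: every \(u\in\GO(V,\phi)\) can be written as \(u=\mu_0\cdot u_0\) with \(u_0\in\SO(V,\phi)\) and \(\mu_0\in K^{*}\), i.e. \(u=\mathrm{prod}(\mu_0 1_V,u_0)\).

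For injectivity I would take \((\mu_0 1_V,u_0)\in\ker(\mathrm{prod})\), so \(\mu_0 u_0=1_V\) and hence \(u_0=\mu_0^{-1}1_V\in\SO(V,\phi)\cap(K^{*}\cdot 1_V)\). Because \(d\) is odd, this intersection is \(\{1_V\}\); therefore \(u_0=1_V\) and then \(\mu_0=1\), so the kernel of \(\mathrm{prod}\) is trivial. This gives that \(\mathrm{prod}\) is an isomorphism, and as a byproduct the decomposition \(u=\mu_0\cdot u_0\) of Lemma \ref{odd} is \emph{unique}. For the final clause, the isomorphism \(\mathrm{prod}\) carries the central subgroup \(K^{*}1_V\times\{1_V\}\) exactly onto \(K^{*}\cdot 1_V\subset\GO(V,\phi)\), so passing to quotients yields the canonical isomorphism
\[
\PO(V,\phi)=\GO(V,\phi)/(K^{*}1_V)\;\cong\;\bigl(K^{*}1_V\times\SO(V,\phi)\bigr)/\bigl(K^{*}1_V\times\{1_V\}\bigr)\;\cong\;\SO(V,\phi),
\]
which sends the class of \(u=\mu_0 u_0\) to \(u_0\); this is well defined precisely because of the uniqueness just noted.

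I do not expect any genuine obstacle here: the computation in Lemma \ref{odd} is what does the work, and the only point requiring a moment's care is verifying that the decomposition is unique (equivalently, that \(\mathrm{prod}\) is injective), since that is exactly what is needed for the induced map on \(\PO(V,\phi)\) to be a well-defined bijection rather than merely a surjection.
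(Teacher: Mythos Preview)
Your argument is correct and follows the same approach as the paper: surjectivity from Lemma \ref{odd} and injectivity from the fact that $\SO(V,\phi)\cap(K^{*}\cdot 1_V)=\{1_V\}$ when $d$ is odd. You simply spell out more of the routine details (that $\mathrm{prod}$ is a homomorphism, and the passage to quotients) than the paper does.
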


\begin{proof}Since $d$ is odd, 
$$\SO(V,\phi)\bigcap [K^{*} \cdot 1_V]=\{1_V\},$$
which implies that $\mathrm{prod}$ is injective. Its surjectiveness follows from Lemma \ref{odd}.
\end{proof}

\begin{thm}
\label{GPOV}
Suppose that $K$ is a field of characteristic zero that contains all roots of unity,  $d \ge 3$ is an odd integer,  $V$ is a $d$-dimensional $K$-vector space and
$$\phi: V \times V \to K$$
a nondegenerate  symmetric $K$-bilinear form that is  anisotropic, i.e.
$\phi(v,v)\ne 0$ for all {\sl nonzero} $v \in V.$

Let $G$ be a finite subgroup in $\PO(V,\phi)$. Then $G$ is commutative, all its non-identity elements have order $2$ and the order of $G$ divides $2^{d-1}$.
\end{thm}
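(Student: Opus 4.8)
The plan is to reduce everything to the already-established description of finite subgroups of $\SO(V,\phi)$. Since $d$ is odd and $d\ge 3$, Corollary \ref{oddPO} tells us that the canonical homomorphism $\SO(V,\phi)\to \PO(V,\phi)$ is a group \emph{isomorphism}. Consequently, if $G$ is a finite subgroup of $\PO(V,\phi)$, then $G$ is the isomorphic image of a finite subgroup $\tilde G$ of $\SO(V,\phi)$, namely the preimage of $G$ under this isomorphism. So it suffices to read off the structure of $\tilde G\subset \SO(V,\phi)$ and transport it back through the isomorphism $G\cong \tilde G$.

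Now I would simply invoke Corollary \ref{exp2} for the subgroup $\tilde G\subset \SO(V,\phi)$. If $\tilde G=\{1_V\}$, then $G$ is trivial and all three assertions (commutativity, every non-identity element of order $2$, order dividing $2^{d-1}$) hold vacuously; the one point to be careful about is precisely that Corollary \ref{exp2} excludes this degenerate case, so it must be dispatched by hand. If $\tilde G\ne\{1_V\}$, then Corollary \ref{exp2} gives that $\tilde G$ is commutative, of exponent $2$, with $|\tilde G|$ dividing $2^{d-1}$ (the sharper bound $2^{d-1}$ rather than $2^{d}$ being exactly what the ``$\SO$'' membership buys, via the determinant-$1$ condition and simultaneous diagonalization in an eigenbasis, as in the proof of Corollary \ref{exp2}). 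Since $G\cong\tilde G$, the same holds for $G$, which is the claim.

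There is essentially no obstacle here: the only genuine input is the odd-dimensional splitting $\GO(V,\phi)=K^{*}1_V\times\SO(V,\phi)$, which has been isolated in Lemma \ref{odd} and Corollary \ref{oddPO}, and everything else is bookkeeping. If one preferred a self-contained argument not quoting Corollary \ref{oddPO}, one could instead argue pointwise: given $\mathbf u\in G$, lift it to $u\in\GO(V,\phi)$, use Lemma \ref{odd} to write $u=\mu_0 u_0$ with $u_0\in\SO(V,\phi)$ and $\mu_0\in K^{*}$, note that $\mathbf u$ is also the image of $u_0$, deduce from $\SO(V,\phi)\cap[K^{*}1_V]=\{1_V\}$ that $u_0$ has finite order, hence $u_0^2=1_V$ by Lemma \ref{order2}, and then conclude commutativity and the order bound by simultaneous diagonalization of the resulting commuting order-$2$ determinant-$1$ operators. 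But the reduction via Corollary \ref{oddPO} is the cleanest route and is the one I would write up.
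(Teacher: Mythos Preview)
Your proof is correct and follows exactly the paper's approach: the paper's proof consists of the single line ``The result follows readily from Corollary \ref{oddPO} combined with Corollary \ref{exp2},'' which is precisely the reduction you carry out. Your explicit handling of the trivial case $\tilde G=\{1_V\}$ (excluded from the hypothesis of Corollary \ref{exp2}) is a nice bit of care that the paper leaves implicit.
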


\begin{proof}
The result follows readily from Corollary \ref{oddPO} combined with
Corollary \ref{exp2}.
\end{proof}

%%%%%%%%%%%%%%%%%%%%%%%%%%%%%%%%%%%%%%%%%

\begin{cor}
\label{quadric}
Suppose that $K$ is a field of characteristic zero that contains all roots of unity,  $d \ge 3$ an odd integer,  $V$ a $d$-dimensional $K$-vector space and let $q: V \to K$ be a  quadratic form such that $q(v) \ne 0$ for all nonzero $v \in V$.
Let us consider  the projective quadric $X_{q}\subset \mathbb{P}(V)$  defined by the equation $q=0$, which is a smooth projective irreducible  $(d-2)$-dimensional variety over $K$.
Let $\Aut(X_{q})$ be the group of biregular automorphisms of $X_{q}$.
Let $G$ be a finite subgroup in $\Aut(X_{q})$. Then $G$ is commutative, all its non-identity elements have order $2$ and the order of $G$ divides $2^{d-1}$.
\end{cor}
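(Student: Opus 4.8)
The plan is to reduce the assertion directly to Theorem~\ref{GPOV} via the identification of $\Aut(X_q)$ with a projective similitude group. First I would pass from the quadratic form $q$ to its polar form: since $\fchar(K)=0$, the formula $\phi(x,y)=\frac{1}{2}\bigl(q(x+y)-q(x)-q(y)\bigr)$ defines a symmetric $K$-bilinear form on $V$ with $\phi(v,v)=q(v)$ for every $v\in V$. Hence $\phi$ is anisotropic because $q$ is; moreover $\phi$ is nondegenerate, since a nonzero vector in its radical would be a nonzero $q$-isotropic vector, contradicting anisotropy. Thus $(V,\phi)$ satisfies all the hypotheses imposed throughout \secref{linalg}, the projective quadric $X_q\subset\mathbb{P}(V)$ cut out by $q=0$ coincides with the quadric $X_\phi$ attached to $\phi$, and (as $d\ge 3$) it is the smooth projective irreducible $(d-2)$-dimensional variety over $K$ to which \remarkref{import} applies.

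Next I would invoke \remarkref{import} (which rests on \cite{KarpenkoM}): there is a group isomorphism $\Aut(X_q)\cong\PO(V,\phi)$. Consequently every finite subgroup $G\subset\Aut(X_q)$ is carried isomorphically onto a finite subgroup of $\PO(V,\phi)$.

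Finally, I would apply \thmref{GPOV}: since $d\ge 3$ is odd and $\phi$ is an anisotropic nondegenerate symmetric bilinear form on the $d$-dimensional $K$-vector space $V$ (with $K$ of characteristic zero and containing all roots of unity), every finite subgroup of $\PO(V,\phi)$ is commutative, has all non-identity elements of order $2$, and has order dividing $2^{d-1}$. Transporting these properties back along the isomorphism $\Aut(X_q)\cong\PO(V,\phi)$ yields the asserted conclusions for $G$.

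I do not expect any genuine obstacle here: the mathematical substance is entirely contained in \thmref{GPOV} (and, behind it, \corref{oddPO} together with \lemref{order2}). The only points that warrant a word of justification are the elementary compatibility $\phi(v,v)=q(v)$ between $q$ and its polar form and the remark that anisotropy — hence also smoothness of $X_q$ — is inherited, both of which are immediate in characteristic zero.
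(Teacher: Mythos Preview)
Your proposal is correct and follows essentially the same route as the paper's proof: pass from $q$ to its polar bilinear form $\phi$, invoke the identification $\Aut(X_q)\cong\PO(V,\phi)$ from \cite{KarpenkoM} (which is exactly the content of \remarkref{import}), and then apply \thmref{GPOV}. The paper's version is slightly terser but the ingredients and the logical structure are identical.
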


\begin{proof}
Let $\phi: V \times V \to K$ be the symmetric $K$-bilinear form such that
$$\phi(v,v)=q(v) \ \forall v \in V.$$
Namely, for all $x,y\in V$
$$\phi(x,y):=\frac{q(x+y)-q(x)-q(y)}{2} \ .$$
Clearly, $\phi$ is nondegenerate.   In the notation of \cite[Sect. 69, p. 310]{KarpenkoM},
$$\GO(q)=\GO(V,\phi), \  \PO(q)=\PO(V,\phi).$$
By Corollary 69.6 of \cite[Sect. 69]{KarpenkoM}, 
 the groups $\Aut(X_{q})$ and $\PO(q)$ are isomorphic. Now the result follows from Theorem \ref{GPOV}.
\end{proof}

\begin{cor}
\label{genuszero}

Suppose that $K$ is a field of characteristic zero that contains all roots of unity. Let $\mathcal{C}$ be a smooth irreducible projective genus $0$ curve over $K$ that is not biregular to $\mathbb{P}^1$ over $K$. 

Let $\Bir_K(\mathcal{C})$ be the group of birational automorphisms of $\mathcal{C}$.
Let $G$ be a finite subgroup in $\Bir_K(\mathcal{C})$. Then $G$ is commutative, all its non-identity elements have order $2$ and the order of $G$ divides $4$. In other words, if $G$ is nontrivial then it is either a cyclic group of order $2$ or is isomorphic to a product of two cyclic groups of order $2$.
\end{cor}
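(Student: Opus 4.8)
The plan is to reduce the statement about the genus zero curve $\mathcal{C}$ to the case of an odd-dimensional anisotropic quadric already handled in Corollary \ref{quadric}. A smooth projective genus zero curve over $K$ is, by the classical theory (e.g. \cite[Th. A.4.3.1]{HindryS}), isomorphic to a smooth plane conic $X_q\subset\mathbb{P}^2_K$ given by a nondegenerate ternary quadratic form $q$; here $d=3$, so $X_q$ is a $(d-2)=1$-dimensional quadric. The hypothesis that $\mathcal{C}$ is \emph{not} biregular to $\mathbb{P}^1_K$ means precisely that $\mathcal{C}$ has no $K$-rational point, which by Witt's theorem (or a direct three-variable argument) is equivalent to $q$ being \emph{anisotropic} over $K$. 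So $\mathcal{C}\cong X_q$ with $q$ an anisotropic ternary form.

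The second ingredient is that $\Bir_K(\mathcal{C})=\Aut_K(\mathcal{C})$. Since $\mathcal{C}$ is a smooth projective curve, every birational self-map extends to a biregular automorphism; this is standard for smooth projective curves over any field, so $\Bir_K(\mathcal{C})=\Aut_K(X_q)$. Now I apply Corollary \ref{quadric} with $d=3$: every finite subgroup $G$ of $\Aut_K(X_q)=\Bir_K(\mathcal{C})$ is commutative, all its non-identity elements have order $2$, and $|G|$ divides $2^{d-1}=2^2=4$. A finite commutative group of exponent dividing $2$ and order dividing $4$ is either trivial, cyclic of order $2$, or isomorphic to $(\mathbb{Z}/2\mathbb{Z})^2$, which is exactly the claimed classification.

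I should double-check one edge point in the reduction: Corollary \ref{quadric} is phrased for a quadratic form $q$, whereas the curve comes a priori as an abstract genus zero curve; the identification of $\mathcal{C}$ with a conic in $\mathbb{P}^2$ uses the anticanonical (degree $2$) embedding, after which $\mathcal{C}$ is cut out by a ternary quadratic form, and non-existence of $K$-points transfers to anisotropy of that form. The only genuine obstacle — and it is a mild one — is making the equivalence ``$\mathcal{C}\not\cong\mathbb{P}^1_K$ $\iff$ $q$ anisotropic'' precise: one direction is Remark \ref{productYP}, and the other is that an isotropic nondegenerate ternary form over a field of characteristic $\neq 2$ is equivalent to the hyperbolic-plus-one-dimensional form, whose projective quadric visibly has a rational point, so $X_q\cong\mathbb{P}^1_K$. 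Everything else is a direct citation of Corollary \ref{quadric}.
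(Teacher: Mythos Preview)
Your proposal is correct and follows essentially the same route as the paper's own proof: identify $\mathcal{C}$ with a plane conic $X_q$ via the anticanonical embedding, use the equivalence ``no $K$-point $\iff$ $q$ anisotropic'' (citing \cite[Th.~A.4.3.1]{HindryS}), note that $\Bir_K(\mathcal{C})=\Aut_K(\mathcal{C})$ for a smooth projective curve, and then apply Corollary~\ref{quadric} with $d=3$. The paper's argument is identical in structure and in the references invoked.
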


\begin{proof}
Since $\mathcal{C}$ has genus zero, it is $K$-biregular to a smooth projective plane quadric
$$\mathcal{X}=\{a_1 T_1^2 +a_2 T_2^2+a_3 T_3^2=0\}\subset \mathbb{P}^2$$
where all $a_i$ are {\sl nonzero} elements of $K$.  Since $\mathcal{C}$  is {\sl not} biregular to $\mathbb{P}^1$,  the set $\mathcal{X}(K)$ is {\sl empty} (\cite[Th.  A.4.3.1 on p. 75]{HindryS}, \cite[Sect. 45, Prop. 45.1 on p. 194]{KarpenkoM}), which means that the nondegenerate ternary quadratic form
$$q(T)=a_1 T_1^2 +a_2 T_2^2+a_3 T_3^2$$
is {\sl anisotropic}.  We may view $q$ as the quadratic form on the coordinate three-dimensional $K$-vector space $V=K^3$.  Then (in the notation of Corollary \ref{quadric})  $d=3$ and $\mathcal{X}=X_q$.   Since $\mathcal{X}$ is a smooth projective curve, its group  $\Bir_K(\mathcal{X})$ of birational automorphisms coincides with the group  $\Aut(\mathcal{X})$ of biregular automorphisms.
Now the result follows from Corollary \ref{quadric}.
\end{proof}

The rest of this section deals with the case of {\sl even} $d$; its results will not be used elsewhere in the paper.

\begin{thm}
\label{AllDim}

Suppose that $K$ is a field of characteristic zero that contains all roots of unity,  $d\ge 2$ is an even positive  integer,  $V$ is a $d$-dimensional $K$-vector space, and
$$\phi: V \times V \to K$$
a nondegenerate  symmetric $K$-bilinear form that is  anisotropic, i.e.
$\phi(v,v)\ne 0$ for all {\sl nonzero} $v \in V.$
Then the group  $\PO(V,\phi)$ is bounded. More precisely, there is a positive integer $\mathbf{n}=\mathbf{n}(d)$ that depends only on $d$ and such 
that every finite subgroup of $\PO(V,\phi)$  has order dividing $2^{ \mathbf{n}(d)}$.
\end{thm}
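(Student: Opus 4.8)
The plan is to reduce the statement to the Jordan property of a general linear group, using the form $\phi$ only once. Let $\Bc$ be a finite subgroup of $\PO(V,\phi)$. By Corollary \ref{exp4} every element of $\Bc$ has order dividing $4$; in particular $|\Bc|$ is a power of $2$. This is the one place where anisotropy of $\phi$ is used (it enters through Lemma \ref{order2}), and after it has served its purpose I would discard the quadratic form entirely.

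Next I would exploit the inclusion $\PO(V,\phi)\subset\PGL(V)=\PGL(d,K)$ together with the conjugation action of $\GL(d,K)$ on the matrix algebra $\mathrm{M}_d(K)$, whose kernel is the group of scalar matrices; it yields a faithful homomorphism $\PGL(d,K)\hookrightarrow\GL(\mathrm{M}_d(K))=\GL(d^{2},K)$. Thus $\Bc$ becomes a finite subgroup of $\GL(d^{2},K)$ all of whose elements have order dividing $4$. By the Jordan property of $\GL(d^{2},K)$ — valid, as recalled in Section \ref{2}, with Jordan index $J_{\GL(d^{2},\BC)}$ for every field $K$ of characteristic zero — there is a commutative normal subgroup $\Ac\trianglelefteq\Bc$ with $[\Bc:\Ac]\le J_{\GL(d^{2},\BC)}$. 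Since $\Ac$ is commutative, all its elements have finite order dividing $4$, and $K$ has characteristic zero and contains all roots of unity, the elements of $\Ac$ are simultaneously diagonalizable over $K$ with eigenvalues among the $4$th roots of unity; hence $|\Ac|\le 4^{d^{2}}$. Therefore $|\Bc|\le J_{\GL(d^{2},\BC)}\cdot 4^{d^{2}}=:M(d)$, which depends only on $d$. As $|\Bc|$ is a power of $2$ with $|\Bc|\le M(d)$, it divides $2^{\mathbf n(d)}$ for $\mathbf n(d):=\lceil\log_2 M(d)\rceil$, which proves the theorem.

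I do not anticipate any real obstacle: the faithfulness of the conjugation representation of $\PGL(d,K)$ and the simultaneous diagonalizability of a commuting family of finite-order operators over a field containing the relevant roots of unity are both standard. The only conceptual point is to notice that the hypothesis of anisotropy is needed solely to produce the exponent bound of Corollary \ref{exp4} and is indispensable there — for split $\phi$ the group $\PO(V,\phi)$ already contains, for each even $d\ge 2$, projective orthogonal and projective linear groups with finite subgroups of unbounded order. (Should one want a sharper bound, comparable to that of Theorem \ref{GPOV}, one could instead induct on $d$ through quadratic subfields: an involution of $\Bc$ that does not come from $\OO(V,\phi)$ lifts to $u\in\GO(V,\phi)$ with $u^{2}$ a non-square scalar, so $L:=K[u]$ is a quadratic field making $V$ a $d/2$-dimensional $L$-vector space with an anisotropic Hermitian or symmetric $L$-form $h$ satisfying $\mathrm{Tr}_{L/K}\circ h=\phi$; one then bounds the centralizer of $u$ in $\Bc$ by induction on $(L,h)$ and its index in $\Bc$ by Corollary \ref{exp4}. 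I would nevertheless keep the shorter $\PGL$-plus-Jordan argument as the main line.)
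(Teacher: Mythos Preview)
Your proof is correct and follows essentially the same route as the paper's: embed $\PO(V,\phi)$ into $\GL(d^{2},K)$ via the conjugation (adjoint) action, invoke Corollary~\ref{exp4} to bound the order of every periodic element by $4$, and then use the (strong) Jordan property of $\GL(d^{2},K)$ to bound the order of any finite subgroup by a constant depending only on $d$. The paper packages the last step as an appeal to Theorem~\ref{boundA}, whereas you unfold that theorem's proof inline (Jordan gives an abelian normal subgroup of bounded index, and simultaneous diagonalizability over $K$ bounds the abelian part by $4^{d^{2}}$); the resulting bound $J_{\GL(d^{2},\BC)}\cdot 4^{d^{2}}$ is the same.
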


%%%%%%%%%%%%%%%%%%changed on July 2, 2016%%%%%%%%%%%%%%%
\begin{proof}
We deduce  Theorem \ref{AllDim} from Theorem  \ref{boundA}.
Let $\Aut_K(\End_K(V))$ be the group of automorphisms of the $K$-algebra $\End_K(V)$. We write $\mathcal{V}_2$ for $\End_K(V)$ viewed as the $d^2$-dimensional $K$-vector space and $\Aut_K(\mathcal{V}_2)$ for its group of $K$-linear automorphisms. We have
$$\Aut_K(\End_K(V))\subset \Aut_K(\mathcal{V}_2).$$
Let us choose a basis $\{e_1, \dots, e_{d^2}\}$ of  $\mathcal{V}_2$. Such a choice gives us a group isomorphism
$$\Aut_K(\mathcal{V}_2)\cong  \GL(d^2,K).$$
%to identify
%$\End_K(V)$ with the $d^2$-dimensional coordinate space $K^{d^2}$
%and $\Aut_K(\End_K(V)$ with a certain subgroup of
 % $\Aut_K(\mathcal{V}_2) \cong \GL(d^2,K)$.
 Let us consider a group homomorphism
$$\mathrm{Ad}: \GO(V,\phi)\subset \Aut_K(V) \to \Aut_K(\End_K(V)), \ u \mapsto \{w \mapsto uwu^{-1} \ \forall \ w\in \End_K(V)\}$$
for all $u\in \GO(V,\phi)\subset \Aut_K(V)$. Clearly, 
$$\ker(\mathrm{Ad})=K^{*}\cdot 1_V.$$
 This gives us an embedding
$$\PO(V,\phi)=\GO(V,\phi)/\{K^{*}\cdot 1_V\} \hookrightarrow \Aut_K(\End_K(V))\hookrightarrow \Aut_K(\mathcal{V}_2)\cong  \GL(d^2,K).$$
This implies that $\PO(V,\phi)$ is isomorphic to a subgroup of $\GL(d^2,K)$.  By Corollary \ref{exp4}, every periodic element 
in $\PO(V,\phi)$ has order 
%%%%%%July 3%%%%%%%%%%%%%%%%%%%%
dividing $4$. This implies (thanks to First Sylow Theorem) that  the order of every finite subgroup of $\PO(V,\phi)$ is a power of $2$. In other words, all finite subgroups in $\PO(V,\phi)$ are $2$-groups.
 Now 
%the boundness of $\PO(V,\phi)$ 
the desired result follows from Theorem  \ref{boundA} (applied to $n=d^2$ and $N=4$).
%%%%%%%%July 3%%%%%%%%%%% 
\end{proof}

 Combining Theorem \ref{AllDim}, Corollary \ref{nilp}  and Remark \ref{karp}, we obtain the following assertion.

\begin{thm}
Suppose that $K$ is a field of characteristic zero that contains all roots of unity,  $d\ge 2$  an even integer,  $V$ a $d$-dimensional $K$-vector space. Let $q: V \to K$ be a  quadratic form such that $q(v) \ne 0$ for all nonzero $v \in V$.
Let us consider  the projective quadric $X_{q}\subset \mathbb{P}(V)$  defined by the equation $q=0$, which is a smooth projective irreducible  $(d-2)$-dimensional variety over $K$.
Let $\Aut(X_{q})$ be the group of biregular automorphisms of $X_{q}$.

Then:
\begin{enumerate}
\item
 $\Aut(X_{q})$ is bounded.  More precisely, there is a positive integer $\mathbf{n}=\mathbf{n}(d)$ that depends only on $d$ and such 
that every finite subgroup of $\Aut(X_{q})$  has order  dividing $2^{ \mathbf{n}(d)}$.
\item
If $\Bc$ is a finite subgroup of $\Aut(X_{q})$ then it  is a finite $2$-group that
 sits in a short exact sequence
$$\{1\} \to \Ac_1 \to \Bc \to \Ac_2\to \{1\}$$
where both $\Ac_1$ and $\Ac_2$ are finite elementary  abelian $2$-groups and  $|\Ac_1|$ divides $2^{d-1}$. In particular, 
$$[[\Bc,\Bc],[\Bc,\Bc]]=\{1\}.$$
\end{enumerate}
\end{thm}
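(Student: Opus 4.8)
The plan is to reduce everything to the linear-algebra results already established in Section~\ref{linalg}, namely Theorem~\ref{AllDim}, Corollary~\ref{nilp}, and the identification of $\Aut(X_q)$ with a projective similitude group recorded in Remark~\ref{karp}. There is essentially no new content to prove; the task is to check that the setup of this section applies and then to transcribe those results.

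First I would pass from the quadratic form $q$ to its polarization. Since $\fchar(K)=0$, the formula
$$\phi(x,y):=\frac{q(x+y)-q(x)-q(y)}{2}$$
defines a symmetric $K$-bilinear form on $V$ with $\phi(v,v)=q(v)$ for all $v\in V$. The hypothesis that $q(v)\ne 0$ for all nonzero $v$ says precisely that $\phi$ is anisotropic; in particular $\phi$ is nondegenerate, and $X_q$ is a smooth projective irreducible $(d-2)$-dimensional quadric over $K$. Thus $(V,\phi)$ satisfies the running hypotheses of Section~\ref{linalg}, now with $d$ even, and in the notation of \cite[Sect.~69]{KarpenkoM} one has $\GO(q)=\GO(V,\phi)$ and $\PO(q)=\PO(V,\phi)$.

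Next, by Corollary~69.6 of \cite[Sect.~69]{KarpenkoM} (see Remark~\ref{karp}) there is a group isomorphism $\Aut(X_q)\cong\PO(V,\phi)$. Part~(1) is then immediate from Theorem~\ref{AllDim}: it provides a positive integer $\mathbf{n}=\mathbf{n}(d)$, depending only on $d$, such that every finite subgroup of $\PO(V,\phi)$ --- hence every finite subgroup of $\Aut(X_q)$ --- has order dividing $2^{\mathbf{n}(d)}$, so $\Aut(X_q)$ is bounded. For part~(2), let $\Bc$ be a finite subgroup of $\Aut(X_q)$, viewed via the isomorphism as a finite subgroup of $\PO(V,\phi)$. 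Corollary~\ref{nilp} supplies a short exact sequence $\{1\}\to\Ac_1\to\Bc\to\Ac_2\to\{1\}$ with $\Ac_1,\Ac_2$ finite elementary abelian $2$-groups and $|\Ac_1|$ dividing $2^{d-1}$; since $\Ac_2=\Bc/\Ac_1$ is abelian we get $[\Bc,\Bc]\subset\Ac_1$, and since $\Ac_1$ is abelian this yields $[[\Bc,\Bc],[\Bc,\Bc]]\subset[\Ac_1,\Ac_1]=\{1\}$. Finally $|\Bc|=|\Ac_1|\cdot|\Ac_2|$ is a power of $2$, so $\Bc$ is a finite $2$-group.

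I expect no real obstacle: the substantive work was already carried out in establishing Theorem~\ref{AllDim} (the adjoint embedding $\PO(V,\phi)\hookrightarrow\Aut_K(\End_K(V))\hookrightarrow\GL(d^2,K)$ combined with the bound coming from Theorem~\ref{boundA} and the fact from Corollary~\ref{exp4} that periodic elements of $\PO(V,\phi)$ have order dividing $4$) and in Corollary~\ref{nilp}. The only point worth double-checking is the polarization step, i.e.\ that anisotropy of $q$ is equivalent to anisotropy of $\phi$, which is clear from the identity $\phi(v,v)=q(v)$; one should also note that passing from $q$ to $\phi$ does not change the quadric $X_q\subset\mathbb{P}(V)$, so the isomorphism of Remark~\ref{karp} indeed applies to the $X_q$ in the statement.
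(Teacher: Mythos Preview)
Your proposal is correct and follows exactly the paper's approach: the paper's own proof is the single sentence ``Combining Theorem~\ref{AllDim}, Corollary~\ref{nilp} and Remark~\ref{karp}, we obtain the following assertion,'' and you have simply unpacked that sentence (including the polarization step, which mirrors the proof of Corollary~\ref{quadric}). One minor labeling point: the identification $\Aut(X_q)\cong\PO(V,\phi)$ is actually recorded in Remark~\ref{import}, not Remark~\ref{karp}; the paper itself cites \ref{karp} here, so this appears to be a quirk of the paper's labeling rather than an error on your part.
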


%\begin{rem}
We hope to return to a classification of finite subgroups of $\Aut(X_{q})$ (for even $d$) in a future publication.
%\end{rem}

\section{Jordaness properties of $\Bir$}\label{5}

\begin{proof}[Proof of Theorem \ref{BirXY}]
Let us put $K=k(Y)$. Then $\fchar(K)=0$. Since $K$ contains algebraically closed $k$, it contains all roots of unity.  In the notation of Corollary \ref{genuszero} let us put $\mathcal{C}=\mathcal{X}_f$. Since $\mathcal{X}_f$ has no $K$-points, it  is {\sl not} biregular to $\mathbb{P}^1$ over $K$.

It follows from Corollary \ref{genuszero} that $\Bir(\mathcal{X}_f)$ is {\sl bounded}. Now Corollary \ref{exactJ} implies that $\Bir(X)$ is strongly Jordan.
\end{proof}
%%%%%%
\begin{proof}[Proof of Theorem \ref{BirXYbis}]
It follows from Remark \ref{generic-general} that $f: X \to Y$ is a conic bundle. In particular, the generic fiber $\mathcal{X}=\mathcal{X}_f$ is an absolutely irreducible smooth projective genus zero curve over $K:=k(Y)$.  

 Since each $K$-point of $\mathcal{X}$ gives rise to a rational section $Y \dasharrow X$ of $f$, there are no $K$-points on $\mathcal{X}$.
\begin{comment}
 and therefore $\mathcal{X}$ is not $K$-biregular to the projective line over $K$ \cite[Th.  A.4.3.1 on p. 75]{HindryS}.
\end{comment}
  Now the result follows from Theorem \ref{BirXY}.
\end{proof}

\begin{ex}
Let $Y$ be a smooth  irreducible projective variety over $k$ of dimension $\ge 2$.  Let $a_1, a_2,a_3$ be nonzero elements of $k(Y)$ such  that the ternary quadratic form
$$q(T)=a_1 T_1^2 +a_2 T_2^2 +a_3 T_3^2$$
is {\sl anisotropic} over $k(Y)$.  Example \ref{aniquad}
gives us a a smooth  irreducible projective variety $\tilde{X}_q$ and a surjective regular map $f:\tilde{X}_q\to Y$, whose generic fiber is the quadric
$$\{a_1 T_1^2 +a_2 T_2^2+a_3 T_3^2=0\}\subset \mathbb{P}^2_{k(Y)}$$
over $k(Y)$ without $k(Y)$-points. Now Theorem \ref{BirXY} tells us that $\Bir(\tilde{X}_q)$ is {\sl strongly Jordan} if $Y$ is non-uniruled.
\end{ex}

\begin{rem}
Recall  (Example \ref{aniquad}) that if $\dim(Y)\ge 2$ then there always exists an {\sl anisotropic} ternary quadratic form over $k(Y)$. (A theorem of Tsen implies that such a form does {\sl not} exists if $\dim(Y)=1$.)
\end{rem}

\begin{proof}[Proof of Theorem \ref{BirX}]
Recall that an abelian variety $Y$ does {\sl not} contain rational curves; in particular, it is {\sl not} uniruled. Now 
Theorem \ref{BirX} follows from Theorem \ref{BirXY}.
\end{proof}

\begin{thm}
\label{uniformA}
Let $d \ge 3$ be an integer. Let $\mathcal{G}$ be the collection of groups $\Bir(X)$ where $X$ runs through the set of  smooth irreducible projective $d$-dimensional varieties that can be realized as conic bundles $f:X \to Y$ over a $(d-1)$-dimensional abelian variety $Y$ but $X$ is not birational to
 $Y \times \mathbb{P}^1$.
Then $\mathcal{G}$  is uniformly strongly Jordan.
\end{thm}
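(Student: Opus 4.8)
The plan is to run the argument behind Theorem~\ref{BirX} uniformly in $X$, this time bookkeeping the constants so that they depend on $d$ only. Fix $X \in \mathcal{G}$ together with a conic bundle structure $f : X \to Y$ over a $(d-1)$-dimensional abelian variety $Y$ with $X \not\sim Y \times \mathbb{P}^1$, and put $K := k(Y)$; since $k \subset K$ and $k$ is algebraically closed, $K$ contains all roots of unity. First I would invoke Remark~\ref{productYP}: because $X$ is \emph{not} birational to $Y \times \mathbb{P}^1$, the generic fiber $\mathcal{X}_f$ carries no $K$-rational point, hence $\mathcal{X}_f$ is a smooth projective genus zero curve over $K$ that is not $K$-biregular to $\mathbb{P}^1$. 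Corollary~\ref{genuszero} then shows that every finite subgroup of $\Bir_K(\mathcal{X}_f)$ has order dividing $4$; in particular the family $\mathcal{G}_1$ of all groups $\Bir_K(\mathcal{X}_f)$ arising in this way is \emph{uniformly bounded}, with bound $4$ independent of $X$.

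Next, an abelian variety is non-uniruled, so Corollary~\ref{exact} applies to each such $f$ and gives an exact sequence
$$\{1\} \to \Bir_K(\mathcal{X}_f) \to \Bir(X) \to \Bir(Y),$$
where $\Bir(Y) = \Aut(Y)$ belongs to the family $\mathbf{A}_{d-1}$. Let $\mathcal{G}_2$ be the family of images of the maps $\Bir(X) \to \Bir(Y)$ as $X$ runs over $\mathcal{G}$; each member of $\mathcal{G}_2$ is a subgroup of some $\Aut(Y)$ with $Y$ a $(d-1)$-dimensional abelian variety. Since $\mathbf{A}_{d-1}$ is uniformly strongly Jordan \cite[Corollary~2.15]{ProSha1}, and since the Jordan index and the abelian rank bound are inherited by arbitrary subgroups, $\mathcal{G}_2$ is uniformly strongly Jordan as well. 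Replacing $\Bir(Y)$ by the image of $\Bir(X)$, every $G = \Bir(X)$ with $X \in \mathcal{G}$ fits in an exact sequence $\{1\} \to G_1 \to G \to G_2 \to \{1\}$ with $G_1 \in \mathcal{G}_1$ uniformly bounded and $G_2 \in \mathcal{G}_2$ uniformly strongly Jordan. Lemma~\ref{klyachko2} then yields at once that $\mathcal{G}$ is uniformly strongly Jordan.

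I do not anticipate a genuine obstacle, since every ingredient — the fiber-triviality criterion of Remark~\ref{productYP}, the exact sequence of Corollary~\ref{exact}, the order bound of Corollary~\ref{genuszero}, and the uniform strong Jordanness of $\mathbf{A}_{d-1}$ — is already in hand; the novelty over Theorem~\ref{BirX} is purely the uniformity of the constants. The two small points that need attention are: (i) the homomorphism $\Bir(X) \to \Bir(Y)$ need not be surjective, so one must pass to its image and use that a family of subgroups of a uniformly strongly Jordan family is again uniformly strongly Jordan with the same constants; and (ii) one must check that the bounds furnished by Corollary~\ref{genuszero} (namely $4$) and by \cite[Corollary~2.15]{ProSha1} depend only on $d$ and not on the individual conic bundle $f : X \to Y$, which is exactly the content of those uniform statements. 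Finally, by Example~\ref{aniquad} conic bundles $X$ over a $(d-1)$-dimensional abelian $Y$ that are not birational to $Y \times \mathbb{P}^1$ do exist (as $d - 1 \ge 2$), so $\mathcal{G}$ is nonempty and the assertion is not vacuous.
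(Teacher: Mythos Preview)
Your proof is correct and follows essentially the same approach as the paper's: invoke Remark~\ref{productYP} to see the generic fiber has no rational point, use Corollary~\ref{genuszero} to uniformly bound the fiber groups by $4$, feed the exact sequence of Corollary~\ref{exact} into the Klyachko-type Lemma~\ref{klyachko2} together with the uniform strong Jordanness of $\mathbf{A}_{d-1}$. The paper packages the last step as Corollary~\ref{abelianUniform}; your explicit handling of the non-surjectivity of $\Bir(X)\to\Bir(Y)$ by passing to the image is in fact a bit more careful than the paper's own invocation.
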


\begin{proof}
It follows from Remark \ref{productYP} that  the generic fiber $\mathcal{X}_f$ of $f$ has {\sl no} $k(Y)$-rational points. 
It follows from Corollary \ref{genuszero} that the collection of groups of the form $\Bir(\mathcal{X}_f)$ is uniformly bounded  - actually,  the order of every finite subgroup in $\Bir(\mathcal{X}_f)$ divides $4$.  Recall (Corollary \ref{exact}) that there is an exact sequence
$$\{1\} \to \Bir(\mathcal{X}_f) \hookrightarrow \Bir(X) \to \Bir(Y).$$
Now the result follows from Corollary \ref{abelianUniform}.
\end{proof}

\begin{rem} 
\label{singularConics}
 The condition that $k$-varieties $X,Y$ in  \thmref{BirX}, \thmref{BirXY}, and  \thmref{BirXYbis}
are smooth, is non-essential.  Indeed, let $X,Y$ be irreducible projective varieties  of dimensions $d$ and $d-1$ , respectively, endowed with  a surjective morphism 
$f:X\to Y.$ 
Let $Y$ be non-uniruled. Due to the resolution of singularities (see, for example, \cite[Chapter 3, section 3.3]{Kol1})  one  can always 
find two smooth projective irreducible varieties $\tilde X, \tilde Y,$   the   birational  morphisms $\pi_X:\tilde X\to X,$ \ $\pi_Y:\tilde Y\to Y,$ \ and  a morphism
$\tilde f :\tilde X\to \tilde Y$  such that  the following diagram is commutative:
\begin{equation}\label{diagram100}
\begin{CD}
\tilde X   @>{\tilde f}>>  \tilde Y\\
@V\pi_X VV @V\pi_Y VV \\
X @>{f}>> Y
\end{CD}.\end{equation}

Then the  following properties are valid:
\begin{enumerate}\item[1.]    $\tilde Y$  is non-uniruled,  since it is birational to the non-uniruled $Y$
(see, for example, \cite[Chapter 4, Remark 4.2] {Deba}).
\item[2.]   If the  general fiber  $F_u:=f^{-1}(u), \ u\in U\subset Y$ is  irreducible, then so is the general  fiber
 $\tilde F_v:=\tilde f^{-1}(v),  \ v\in V\subset \tilde Y$  of $\tilde f$ (here $U, V$ 
are open  dense  subsets of $Y,  \ \tilde Y,$  respectively). 

Indeed, there is an open dense  $V'\subset \tilde Y$ such that

--$\pi_Y$ is an isomorphism of $V'$ to $\pi_Y(V');$

--for every $v\in V'$
the exceptional set $S_X$ of morphism $\pi_X$  intersects  the 
fiber $\tilde F_v$ of $\tilde f$ only at  an empty or a finite set of points.  It is valid, because  $  \dim (S_X)\le \dim (\tilde Y)= d-1, $   hence   the restriction of $\tilde f$ onto an irreducible component of $S_X$  is either non-dominant, or generically finite.

\noindent Thus, 
$\tilde F_v \cap(\tilde X\setminus S_X)$ is open  and dense in  $\tilde F_v$ for  point $v\in V'$ (because every irreducible component  of $\tilde F_v$ has dimension at least $1$).
On the other hand,   $\tilde F_v\cap(\tilde X\setminus S_X)$  is isomorphic via $\pi_X$  to  $F_{\pi_Y(v)} \cap( X\setminus \pi_X(S_X)),$   which is an open and dense subset of   irreducible $F_{\pi_Y(v)}, $  if $\pi_Y(v)\in U.$
Hence, for all $v\in V'\cap \pi_Y^{-1}(U)$ the fibers $\tilde F_v$ and $F_{\pi_Y(v)}$  are birational.

\item [3.]  If the  generic fiber $\mathcal  X_{f}$ of $f$ is absolutely irreducible, so is  generic  the  fiber $\mathcal X_{\tilde  f}$  of $\tilde f.$  Indeed, in this case the general fiber of $f$ is irreducible  (see
\cite[Proposition 9.7.8] {EGA43}). According to property [2], the general fiber of $\tilde f$ is also  irreducible, and, hence, so is $\mathcal X_{\tilde  f}$  ({\it ibid}).

\item [4.]   The general and generic fibers $\tilde F_v$ and  $\mathcal X_{\tilde f}$  of $\tilde f$  are smooth, since ${\tilde f}$ is a surjective morphism between smooth projective varieties.
\end{enumerate}
   
It follows that if the  generic  (respectively, general) fiber of $ f$  is a  rational curve, then 
the  generic  (respectively, general) fiber of $\tilde f$  is a smooth rational curve. 
 According to \thmref{BirXY},   (respectively,  \thmref{BirXYbis} , )  $\Bir(X)=\Bir(\tilde  X)$ has to be Jordan.

%We thank the referee for  the  suggestion to  add  this observation. 
\end{rem}

%%%%%%%%%%%%%%%%%%%%%%%%%%%%%%%%%%%%%%%%%%%%%%

\end{document}